\newcommand{\R}{\mathbb R}
\newtheorem{theorem}{Theorem}[section]
\newtheorem{lemma}[theorem]{Lemma}
\newtheorem{prop}[theorem]{Proposition}
\newtheorem*{theorem estimates}{Theorem A}
\newtheorem*{theorema}{Theorem A}
\newtheorem*{theoremb}{Theorem B}
\DeclareMathOperator{\C}{\mathbb{C}}
\DeclareMathOperator{\CP}{\mathbb{C}\mathbb{P}}
\DeclareMathOperator{\PSL}{\mathrm{PSL}}
\DeclareMathOperator{\inners}{\langle \cdot, \cdot \rangle}
\newcommand{\ccpair}{(c_1, \overline{c_2})}
\DeclareMathOperator{\fm}{\vb*{\overline {f_{--}}}}
\DeclareMathOperator{\f+}{\vb*{f_+}}
\newcommand{\hpair}{\vb*h_{(c_1, \overline{c_2})}}
\theoremstyle{definition}
\theoremstyle{remark}
\newtheorem{remark}[theorem]{Remark}
\begin{document}
\title[Holomorphic dependence for the Beltrami equation in Sobolev spaces]{Holomorphic dependence for the Beltrami equation in Sobolev spaces}

\author[Christian El Emam]{Christian El Emam}
\address{Christian El Emam: University of Torino, Dipartimento di Matematica ``Giuseppe Peano", Via Carlo Alberto, 10, 10123 Torino, Italy.} \email{christian.elemam@unito.it}

\author[Nathaniel Sagman]{Nathaniel Sagman}
\address{Nathaniel Sagman: University of Luxembourg, 
Maison du Nombre,
6 Avenue de la Fonte,
L-4364 Esch-sur-Alzette, Luxembourg.} \email{nathaniel.sagman@uni.lu}

\begin{abstract}
We prove that, given a family of Beltrami forms on $\C$ with $L^\infty$ norm at most $\eta<1$ and that live in and vary holomorphically in the Sobolev space $W_{\textrm{loc}}^{l,\infty}(\Omega)$ of an open subset $\Omega\subset \C$, the canonical solutions to the Beltrami equation vary holomorphically in $W_{\textrm{loc}}^{l+1,p}(\Omega)$, for some $p=p(\eta)>2$. This extends a foundational result of Ahlfors and Bers (the case $l=0$). As an application, we deduce that Bers metrics depend holomorphically on their input data. 
\end{abstract}
\maketitle

\section{Introduction}

Throughout the paper, for an open subset $\Omega\subset \C$, $1\leq p\leq\infty$, and  $l\in\mathbb{Z}_{\geq 0}$, let $L^p(\Omega)$ and $W^{l,p}(\Omega)$ be the $L^p$ and Sobolev spaces respectively for maps from $\Omega$ to $\C$, and let $L_{\textrm{loc}}^p(\Omega)$ and $W_\textrm{loc}^{l,p}(\Omega)$ be the local versions, which we interpret as complex Fr{\'e}chet spaces. For $l=\infty,$ we use the notation $W_{\textrm{loc}}^{l,p}=C^\infty$. For $p=\infty$ and $\delta>0$, let $L_\delta^\infty(\Omega)$ be the open ball $\{\mu \in L^\infty(\Omega): ||\mu||_\infty<\delta\}$.

An orientation preserving homeomorphism $f$ between domains $\Omega, \Omega'\subset \C$ is \textbf{quasiconformal} if it lies in the Sobolev space $W_{\textrm{loc}}^{1,2}(\Omega)$ and if there exists a constant $0\leq k<1$ such that, for distributional derivatives $\partial_z f$ and $\partial_{\overline{z}} f$, $|\partial_{\overline{z}}f|\leq k|\partial_z f|$ almost everywhere. The measurable function $\mu$ defined by
 \[
\partial_{\overline z} f= \mu \partial_z f
\] 
 is called the \textbf{Beltrami coefficient}. The classical Measurable Riemann Mapping Theorem states that for all $\mu\in L^{\infty}$ with $\|\mu\|_{\infty}<1$, there exists a unique quasiconformal homeomorphism $f^{\mu}\colon \C\to \C$ with Beltrami coefficient $\mu$ and satisfying $f^{\mu}(0)=0$ and $f^{\mu}(1)=1$. The map $f^\mu$ is often called the canonical solution to the Beltrami equation.

For $1<p<\infty$, let $N_p$ denote the operator norm of the Beurling transform $T:L^p(\C)\to L^p(\C)$, defined in Section \ref{sec: elliptic estimates}. The function $p\mapsto N_p$ is continuous in $p$ and satisfies $N_2=1$ (see Proposition \ref{beurlingprop}). We consider the space $L^\infty(\C)\cap W_{\textrm{loc}}^{l,\infty}(\Omega)$ equipped with the semi-norms of $W_{\textrm{loc}}^{l,\infty}(\Omega)$, making it a complex Fr{\'e}chet space, and the open subsets $L_\delta^\infty(\C)\cap W_{\textrm{loc}}^{l,\infty}(\Omega)$ as complex Fr{\'e}chet submanifolds. The main result of the paper is the following.
\begin{theorema}\label{theoremb}
Let $\Omega\subset \C$ be an open set. For $1<p<\infty$, $l\in\mathbb{Z}_{\geq 0},$ and $\eta=\frac{1}{N_p},$ if $\mu \in L_\eta^\infty(\C)\cap W_{\textrm{loc}}^{l,\infty}(\Omega),$ then $f^{\mu}\in W_\textrm{loc}^{l+1,p}(\Omega)$, and the map 
 \begin{align*}
        L_\eta^\infty(\C)\cap W_{\textrm{loc}}^{l,\infty}(\Omega) &\to W_{\textrm{loc}}^{l+1,p}(\Omega)\\
     \mu  &\mapsto f^{\mu}
    \end{align*} is holomorphic.
   \end{theorema}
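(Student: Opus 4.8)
\medskip
\noindent\emph{Proof strategy.}
The plan is to reduce the statement to the regularity of $\dbar f^{\mu}$ together with locally uniform norm bounds, and to obtain these by bootstrapping inside $\Omega$. First recall that when $\mu$ has compact support the principal solution is $f^{\mu}=\mathrm{id}+\mathcal C\omega$ (up to the affine normalisation fixing $0$ and $1$; $\mathrm{id}$ the identity map), where $\mathcal C$ is the Cauchy transform and $\omega:=\dbar f^{\mu}\in L^{p}(\mathbb C)$ is the unique solution of $(\mathrm{Id}-\mu T)\omega=\mu$; this is legitimate because $\|\mu T\|_{L^{p}\to L^{p}}\le\|\mu\|_{\infty}N_{p}<1$, so $\omega=\sum_{n\ge0}(\mu T)^{n}\mu$, and the case $l=0$ is exactly Ahlfors--Bers: $\mu\mapsto\mu T$ is bounded $\mathbb C$-linear into $\mathcal B(L^{p}(\mathbb C))$, operator inversion is holomorphic on $\{\|A\|<1\}$, and $\mathcal C$ lands holomorphically in $W_{\textrm{loc}}^{1,p}$. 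Since $\partial\mathcal C=T$ and $\dbar\mathcal C=\mathrm{Id}$, one has $\dbar f^{\mu}=\omega$, $\partial f^{\mu}=1+T\omega$; and because the Beurling kernel is smooth off the diagonal, a disjoint-support cutoff shows that $T$ maps $L^{p}(\mathbb C)\cap W_{\textrm{loc}}^{l,p}(\Omega)$ continuously into $W_{\textrm{loc}}^{l,p}(\Omega)$. Thus it would suffice to prove: for every $K\Subset\Omega$, $f^{\mu}\in W^{l+1,p}(K)$ with $\|f^{\mu}\|_{W^{l+1,p}(K)}$ bounded locally uniformly in $\mu$. Indeed, as $\mu\mapsto f^{\mu}$ is already holomorphic into the coarser Fr\'echet space $W_{\textrm{loc}}^{1,p}(\Omega)$, whose dual separates points of $W_{\textrm{loc}}^{l+1,p}(\Omega)$, a standard vector-valued holomorphy criterion then upgrades this to holomorphy into $W_{\textrm{loc}}^{l+1,p}(\Omega)$.

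\medskip

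For $l\ge1$ I would first handle the case of a \emph{globally} Sobolev, compactly supported coefficient $\nu\in W^{l,\infty}(\mathbb C)\cap L_{\eta}^{\infty}(\mathbb C)$. The key point is that $\mathrm{Id}-\nu T$ is then invertible on $W^{m,p}(\mathbb C)$ for every $0\le m\le l$, with inverse depending holomorphically on $\nu$ and norm controlled by $\|\nu\|_{W^{m,\infty}}$. This I would prove by induction on $m$: differentiating $(\mathrm{Id}-\nu T)u=\Phi$, using that $T$ commutes with $\partial$ and $\dbar$ (it is a constant-coefficient Fourier multiplier) and the Leibniz rule, gives $(\mathrm{Id}-\nu T)(Du)=D\Phi+(D\nu)(Tu)$, whose right-hand side lies in $W^{m-1,p}(\mathbb C)$ because $D\Phi\in W^{m-1,p}$, $D\nu\in W^{m-1,\infty}$ and $Tu\in W^{m-1,p}$ (with $T$ bounded on $W^{m-1,p}$); so invertibility on $W^{m,p}$ follows from invertibility on $W^{m-1,p}$, the base case $m=0$ being the Neumann series, and holomorphy in $\nu$ propagates through the same recursion. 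Since compact support puts $\nu$ itself in $W^{l,p}(\mathbb C)$, this gives $\omega=(\mathrm{Id}-\nu T)^{-1}\nu\in W^{l,p}(\mathbb C)$, hence $f^{\nu}\in W_{\textrm{loc}}^{l+1,p}(\mathbb C)$, with the required norm bounds and holomorphic dependence.

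\medskip

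To reach the merely locally Sobolev $\mu$ of the theorem I would localise. Fix $z_{0}\in\Omega$ and a small ball $\Omega'\Subset\Omega$ with $z_{0}\in\Omega'$ on which $\mu\in W^{l,\infty}$, and pick $\rho\in C_{c}^{\infty}(\Omega')$ with $\rho\equiv1$ near $z_{0}$; then $\rho\mu\in W^{l,\infty}(\mathbb C)\cap L_{\eta}^{\infty}(\mathbb C)$ is compactly supported, so $f^{\rho\mu}\in W_{\textrm{loc}}^{l+1,p}(\mathbb C)$ with norm bounds depending only on $\|\rho\mu\|_{W^{l,\infty}}$ (hence locally uniform in $\mu$). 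As $\mu=\rho\mu$ near $z_{0}$, there $f^{\mu}=G\circ f^{\rho\mu}$ for some conformal $G$; composition with a holomorphic map preserves $W_{\textrm{loc}}^{l+1,p}$-regularity (chain rule together with $W_{\textrm{loc}}^{l+1,p}\hookrightarrow C_{\textrm{loc}}^{l,\alpha}$, valid since $p>2$), and the conformal correction $G$ — whose modulus is controlled by that of the normalised quasiconformal maps $f^{\mu},f^{\rho\mu}$, so that Cauchy's estimates bound all its derivatives on compacts — contributes only locally uniformly bounded factors. Letting $z_{0}$ vary yields $f^{\mu}\in W_{\textrm{loc}}^{l+1,p}(\Omega)$ with locally uniform norm bounds, which by the first paragraph completes the proof.

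\medskip

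The main obstacle, underlying all of this, is the clash between the purely local Sobolev regularity of $\mu$ inside $\Omega$ and the global, nonlocal character of $T$ and $(\mathrm{Id}-\mu T)^{-1}$. A direct attempt to differentiate $\omega=\mu+\mu T\omega$ inside $\Omega$ stalls: the source term it produces contains $T\omega$, which only belongs to $L^{p}(\mathbb C)$ globally and so fails to gain the needed regularity; this is why the argument must first manufacture a \emph{globally} Sobolev problem (via the cutoff $\rho$) and then undo the cutoff by a conformal post-composition. The subtler point is that this correction must be compatible with \emph{holomorphic}, not merely continuous, dependence on $\mu$; routing the argument through local boundedness plus the already-known $l=0$ holomorphy (as above) avoids having to differentiate the correction, but should that prove delicate one can instead analyse directly the linearised Beltrami equation satisfied by $\partial_{\lambda}f^{\mu_{0}+\lambda\mu_{1}}$.
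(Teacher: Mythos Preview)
Your argument is sound and takes a genuinely different route from the paper's. The paper first proves interior elliptic estimates for $\partial_{\overline z}-\mu\partial_z$ with $W^{k,\infty}_{\textrm{loc}}$ coefficient (its Theorem~2.1, obtained by the same differentiate-the-integral-equation induction you use, but run for $C_0^\infty$ data and then localised by approximation), and then \emph{re-runs} the Ahlfors--Bers first-variation machinery: it bootstraps the convergence of $(f^{\mu(s)}-f^\mu)/s\to\theta^{\mu,a}$ and of $\theta^{\mu_n,a_n}\to\theta^{\mu,a}$ from $W^{1,p}_{\textrm{loc}}$ to $W^{l+1,p}_{\textrm{loc}}$ via those estimates, obtains the explicit Taylor expansion of $f^{\mu(t)}$ in the finer space, and finishes with Graves--Taylor--Hille--Zorn. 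You instead reduce everything to locally uniform $W^{l+1,p}$ bounds (via the globally Sobolev case plus the conformal factorisation $f^\mu=G\circ f^{\rho\mu}$) and invoke an abstract upgrade principle, never touching $\theta^{\mu,a}$. The paper's route yields explicit derivative formulas and a reusable interior estimate for the Beltrami operator; yours is slicker once the upgrade principle is in hand, and the conformal factorisation is a clean way to bypass the local/global mismatch that the paper absorbs into its elliptic estimate.

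Two points to tighten. First, the ``standard criterion'' you cite---locally bounded into $W^{l+1,p}_{\textrm{loc}}$, holomorphic into $W^{1,p}_{\textrm{loc}}$, the latter's dual separating points of the former---is not valid for a merely separating subset of the dual in arbitrary Banach spaces; what rescues it here is that each $W^{l+1,p}(K)$ is reflexive for $1<p<\infty$, so local boundedness plus $W^{1,p}$-continuity forces weak sequential continuity into $W^{l+1,p}(K)$, the contour integrals $\int_\gamma f^{\mu(\lambda)}\,d\lambda$ then exist (weakly) in $W^{l+1,p}(K)$ and agree with their vanishing $W^{1,p}$ values, and Morera plus Dunford finish. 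You should say this. Second, your parenthetical ``$W^{l+1,p}_{\textrm{loc}}\hookrightarrow C^{l,\alpha}_{\textrm{loc}}$, valid since $p>2$'' is too restrictive, since the theorem covers all $1<p<\infty$: for $l\ge1$ one has $(l+1)p>2$ in dimension two, so $W^{l+1,p}_{\textrm{loc}}$ is an algebra and the Fa\`a di Bruno bound on $G\circ f^{\rho\mu}$ goes through via Sobolev multiplication rather than the cruder H\"older embedding; the case $l=0$ is Ahlfors--Bers and needs no composition step.
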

For $l=0$, this is a classical result of Ahlfors and Bers \cite[Theorem 11]{AB} (a proof is also found in \cite[\S 5.7]{Ast}), which, among other applications, is foundational to Teichm{\"u}ller theory. For $l=\infty,$ it follows that $\mu\mapsto f^\mu$ defines a holomorphic map $L_1^\infty(\C)\cap C^\infty(\Omega) \to C^\infty(\Omega)$ (see Remark \ref{rem: Cinftycase}).

Our motivation for proving Theorem A stems from quasi-Fuchsian geometry. Let $S$ be an oriented surface with complex structure $c_0$, and let $\overline{c_0}$ denote the conjugate complex structure on the oppositely oriented surface $\overline{S}$. To ease the notation, we will use $\widetilde S$ to denote the universal cover of both $S$ and $\overline S$. Assume that the universal cover $(\widetilde S, c_0)$ is biholomorphic to the upper half-space $\mathbb{H},$ and fix for convenience a biholomorphism $\psi_0\colon (\widetilde S,c_0)\to \mathbb H$. The complex conjugate $\overline{\psi_0}$ is a biholomorphism from $(\widetilde{S},\overline{c}_0)$ to $\overline{\mathbb{H}}.$ As well, observe that there exists a representation $\rho_0\colon\pi_1(S)\to \PSL(2,\R)$ for which $\psi_0$ is equivariant: for all $\gamma\in \pi_1(S),$ $\psi_0\circ \gamma=\rho_0(\gamma)\circ \psi_0$.

Let $\mathcal{C}(S,c_0)$ be the space of complex structures $c$ on $S$ such that the identity map $(S,c_0)\to (S,c)$ is quasiconformal (see Section \ref{subs: complex structures}). When $S$ is closed, $\mathcal{C}(S,c_0)$ is just the space of complex structures inducing the same orientation on $S$ and does not depend on $c_0$. For context, the (quasiconformal) Teichm{\"u}ller space $\mathcal{T}(S)$ of $S$ is realized as a quotient of $\mathcal{C}(S,c_0)$. 
Moreover, by taking the Beltrami differential of the quasiconformal mapping, $\mathcal C(S,c_0)$ is in one-to-one correspondence with Beltrami forms on $(S,c_0)$ (see Section \ref{subs: complex structures}). 

For $c_1\in \mathcal{C}(S,c_0)$, we will say that a map $\sigma\colon (\widetilde S,c_1)\to \CP^1$ is \textbf{normalized} if the map $\sigma\circ {\psi_0}^{-1}\colon \mathbb H \to \CP^1$ extends continuously to a boundary map $\partial_\infty \mathbb H \to \CP^1$ that fixes $0,1,$ and $\infty$. Similarly, for $\overline{c_2}\in \mathcal{C}(S,\overline{c_0})$, the map $\sigma\colon (\widetilde S,\overline{c_2})\to \CP^1$ is normalized if $\sigma\circ (\overline{{\psi_0}})^{-1}$ extends analogously. 

A well-known consequence of the Measurable Riemann Mapping Theorem is Bers' Simultaneous Uniformization Theorem (see \cite{SU}), whose proof yields the following statement (see Section \ref{subsection: Bers theorem} for details). 

\begin{theorem} [Bers \cite{SU}]
\label{theorem: Bers theorem}
    For all $(c_1, \overline{c_2})\in \mathcal C(S,c_0)\times \mathcal C(\overline S, \overline{c_0})$, 
there exist two unique disjoint open discs $\Omega_+, \Omega_-\subset \CP^1$ such that $\CP^1\setminus (\Omega_+\sqcup \Omega_-)=\partial \Omega_+=\partial \Omega_-$ is a Jordan curve, and two unique normalized biholomorphisms 
$$\vb*{f_+}(c_1,\overline {c_2})\colon (\widetilde S, c_1) \to \Omega_+,\qquad \qquad  \vb*{\overline{f_-}}(c_1,\overline {c_2})\colon (\widetilde S, \overline{c_2}) \to \Omega_-$$ 
that are equivariant for the same representation $\pi_1(S)\to \PSL(2,\C)$.
\end{theorem}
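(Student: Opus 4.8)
The plan is to derive Bers' theorem as a consequence of the Measurable Riemann Mapping Theorem by gluing the two Beltrami coefficients into a single Beltrami coefficient on $\CP^1$ and then splitting the resulting quasiconformal map. First I would transport the data to $\CP^1$: identify $\mathbb H$ with the upper hemisphere and $\overline{\mathbb H}$ with the lower hemisphere of $\CP^1 = \mathbb H \sqcup \partial_\infty\mathbb H \sqcup \overline{\mathbb H}$, so that $\psi_0$ and $\overline{\psi_0}$ together realize $\widetilde S \sqcup \partial \sqcup \widetilde S$ over $\CP^1$. Given $(c_1,\overline{c_2})$, let $\mu_1$ be the Beltrami form on $(S,c_0)$ corresponding to $c_1$ and $\mu_2$ the one corresponding to $\overline{c_2}$ on $(\overline S,\overline{c_0})$; pull these back via $\psi_0^{-1}$ and $\overline{\psi_0}^{-1}$ to $\rho_0(\pi_1 S)$-invariant Beltrami coefficients $\widetilde\mu_1$ on $\mathbb H$ and $\widetilde\mu_2$ on $\overline{\mathbb H}$. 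Define $\mu \in L^\infty(\CP^1)$ by setting $\mu = \widetilde\mu_1$ on $\mathbb H$, $\mu = \widetilde\mu_2$ on $\overline{\mathbb H}$ (and arbitrarily, say $0$, on the measure-zero equator), noting $\|\mu\|_\infty < 1$ since $c_1,c_2$ are quasiconformally equivalent to the base structures.

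Next I would invoke the Measurable Riemann Mapping Theorem to obtain the canonical quasiconformal homeomorphism $f = f^\mu \colon \CP^1 \to \CP^1$ normalized to fix $0,1,\infty$; here one works on $\CP^1$ rather than $\C$, which is harmless since the puncture is removable for quasiconformal maps, or one conjugates by a M\"obius transformation and uses the version on $\C$. Set $\Omega_+ = f(\mathbb H)$ and $\Omega_- = f(\overline{\mathbb H})$; these are disjoint open topological discs whose common boundary is the Jordan curve $f(\partial_\infty\mathbb H)$, and they partition $\CP^1$ as required. Now I would verify equivariance: for each $\gamma\in\pi_1(S)$, the map $f\circ\rho_0(\gamma)\circ f^{-1}$ is quasiconformal on $\CP^1$, and a direct Beltrami-coefficient computation (using that $\rho_0(\gamma)\in\PSL(2,\R)$ preserves $\mathbb H$, $\overline{\mathbb H}$ and that $\mu$ is $\rho_0(\gamma)$-invariant by construction) shows it has Beltrami coefficient $0$ almost everywhere, hence is a M\"obius transformation; call it $\rho(\gamma)\in\PSL(2,\C)$. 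One checks $\rho$ is a homomorphism. Then $\vb*{f_+} := f\circ\psi_0 \colon (\widetilde S,c_1)\to\Omega_+$ is a biholomorphism (it is quasiconformal with Beltrami coefficient matching $c_1$, hence conformal onto its image for the correct complex structure), it is $\rho$-equivariant, and it is normalized because $f$ fixes $0,1,\infty$ and $\mathbb H$ is invariant; likewise for $\vb*{\overline{f_-}} := f\circ\overline{\psi_0}$ on the lower hemisphere.

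For uniqueness I would argue that any two choices $(\Omega_\pm,\vb*{f_+},\vb*{\overline{f_-}},\rho)$ and $(\Omega_\pm',\vb*{f_+}',\vb*{\overline{f_-}}',\rho')$ differ by post-composition with a M\"obius transformation: gluing $\vb*{f_+}$, $\vb*{\overline{f_-}}$ along the boundary yields a quasiconformal self-map of $\CP^1$ with the same Beltrami coefficient $\mu$, so by the uniqueness clause of the Measurable Riemann Mapping Theorem it is determined once we normalize, and the normalization condition (fixing $0,1,\infty$) pins it down exactly. The main obstacle, and the point requiring the most care, is the \emph{boundary gluing step}: showing that the two biholomorphisms (or their inverses) fit together across the Jordan curve $\partial_\infty\mathbb H$ to give a genuinely quasiconformal homeomorphism of all of $\CP^1$ — i.e. that no mass concentrates on the equator and that the glued map lies in $W^{1,2}_{\mathrm{loc}}$ — which is exactly what lets one apply the Measurable Riemann Mapping Theorem on the sphere; equivalently one must know that quasiconformal maps do not see the measure-zero seam. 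The standard resolution is a removability argument for the analytic arc $\partial_\infty\mathbb H$ (a quasicircle is removable for $W^{1,2}_{\mathrm{loc}}$ quasiconformal maps), together with the observation that the constructed $f^\mu$ restricted to each hemisphere automatically solves the respective Beltrami equation there.
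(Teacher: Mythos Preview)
Your proposal is correct and follows essentially the same route as the paper: glue the two Beltrami forms into a single $\widehat\mu\in L^\infty_1(\C)$, apply the Measurable Riemann Mapping Theorem to get the normalized $f^{\widehat\mu}$, and then restrict to $\mathbb H$ and $\overline{\mathbb H}$ to obtain $\vb*{f_+}$ and $\vb*{\overline{f_-}}$; equivariance is deduced from the uniqueness clause of the Measurable Riemann Mapping Theorem, exactly as you outline. The paper's sketch (in the subsection on Bers' theorem) is in fact briefer than yours---it records only the existence construction and the one-line equivariance observation, deferring to Bers' original paper and Hubbard's book---so your added detail on the uniqueness step and the removability of the real line for the gluing goes beyond what the paper itself supplies.
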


When $S$ is closed, Bers' Simultaneous Uniformization is often cast as a biholomorphism between $\mathcal{T}(S)\times \mathcal{T}(\overline{S})$ and the space of quasi-Fuchsian representations from $\pi_1(S)$ to $\textrm{PSL}(2,\C)$, which we can read off from the statement above.

In \cite{BEE}, for $S$ closed, the authors use Bers' theorem to associate points in $\mathcal{C}(S)\times \mathcal{C}(\overline{S})$ to complex metrics on $S$ called Bers metrics. We use Theorem A to prove that this association is holomorphic. Things go through more or less the same if we drop the compactness assumption, so we work in generality.

Here is the construction. Given $(c_1,\overline{c_2})\in \mathcal{C}(S,c_0)\times \mathcal{C}(\overline{S},\overline{c_0})$, we package the maps $f_1=\vb*{f_+}\ccpair$ and $\overline{f_2}=\vb*{\overline{f_-}}\ccpair$ from above into a map $$F=(f_1,{\overline{f_2}}): \widetilde S\to \mathbb{CP}^1\times\mathbb{CP}^1\backslash \Delta.$$  The space $\mathbb{CP}^1\times\mathbb{CP}^1\backslash \Delta$ is a holomorphic Riemannian manifold (see, for example, \cite{DZ}), carrying a holomorphic metric 
defined as follows: in any complex affine chart $(U,z)$ on $\CP^1$, the metric on $(U\times U\setminus \Delta, (z_1,z_2))$ has the form 
\begin{equation}
\label{eq: holo metric}
 \inners= -\frac 4 {(z_1-z_2)^2} dz_1\cdot dz_2 \ .
\end{equation}
The Bers metric is the pullback $\vb*{h}_{(c_1,\overline{c_2})}=F^*\inners,$
$$
\hpair= - \frac 4 {(f_1-{\overline{f_2}})^2} d{f_1}\cdot d{\overline{f_2}}.
$$
The equivariance of $f_1$ and $\overline{f_2}$ implies that, as a symmetric complex-valued bilinear form on the complexified tangent bundle $\C T \widetilde S$, $\hpair$ is $\pi_1(S)$-invariant, and hence descends to a bilinear form on $\C T S$. When $c_1=c_2,$ $\vb*{h}_{(c_1,\overline{c_2})}$ is the unique hyperbolic metric in the conformal class prescribed by $c_1=c_2$. That is, $(c_1,\overline{c_2})\mapsto \vb*{h}_{(c_1,\overline{c_2})}$ extends the usual correspondence between complex structures and hyperbolic metrics on $S$. Several geometric interpretations of Bers metrics in relation with Teichm\"uller and quasi-Fuchsian geometry are shown in \cite{ElE}, and applications to the study of equivariant immersions of surfaces in holomorphic Riemannian manifolds, as well as higher Teichm{\"u}ller theory, are studied in \cite{BEE}, \cite{ElSa}, 
\cite{ElSa2}, and \cite{RT}.

As we mentioned before, every element of $\mathcal{C}(S,c_0)$ identifies with a Beltrami form on $(S,c_0)$. For $1\leq l\leq \infty$, we say that a complex structure in $\mathcal{C}(S,c_0)$ is $W_{\textrm{loc}}^{l,\infty}$, if the corresponding Beltrami form is $W_{\textrm{loc}}^{l,\infty}$, and we identify the space of such structures $\mathcal{C}^l(S,c_0)$ with the Fr{\'e}chet space of $W_{\textrm{loc}}^{l,\infty}$ Beltrami forms. When $S$ is closed and $l<\infty$, $\mathcal{C}^l(S,c_0)$ is Banach space. Let $\mathcal{S}(S)$ be the space of complex-valued symmetric bilinear forms on the complexified tangent bundle of $S$, and for $1\leq l\leq \infty$, let $\mathcal{S}^l(S)\subset \mathcal{S}(S)$ be the subspace of $W_{\textrm{loc}}^{l,2}$ bilinear forms. $\mathcal{S}^l(S)$ is a Fr{\'e}chet space, and a Banach space when $S$ is closed and $l<\infty$.
{See Sections \ref{subs: complex structures} and \ref{sec: proof of theorem B} for more information on the Fréchet structures.}

\begin{theoremb}
For all $1\leq l\leq \infty,$ the Bers metric map
    \begin{align*}
        \mathcal{C}(S,c_0)\times \mathcal{C}(\overline{S},\overline{c_0})&\to \mathcal{S}(S)\\
        (c_1,\overline{c_2})&\mapsto \vb*{h}_{(c_1,\overline{c_2})}
    \end{align*}
restricts to a holomorphic map $\mathcal{C}^l(S,c_0)\times \mathcal{C}^l(\overline{S},\overline{c_0})\to \mathcal{S}^{l}(S)$. 
\end{theoremb}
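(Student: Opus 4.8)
The plan is to realize the Bers maps $\vb*{f_+}\ccpair$ and $\vb*{\overline{f_-}}\ccpair$ as restrictions of a single canonical solution $f^\nu$ to a Beltrami equation on $\C$, to apply Theorem A, and then to check that forming the pullback metric $F^*\inners$ is a holomorphic operation; since holomorphicity is local and holomorphic maps between open subsets of Fréchet spaces compose, it suffices to exhibit $\ccpair\mapsto\hpair$ near each point as a composite of holomorphic maps into the fixed target $\mathcal S^l(S)$. First I construct the coefficient: identify $c_1$ and $\overline{c_2}$ with their Beltrami forms on $(S,c_0)$ and $(\overline S,\overline{c_0})$, lift these to $\widetilde S$, and transport them by the fixed biholomorphisms $\psi_0\colon(\widetilde S,c_0)\to\mathbb{H}$ and $\overline{\psi_0}\colon(\widetilde S,\overline{c_0})\to\overline{\mathbb{H}}$, obtaining Beltrami coefficients $\nu_+$ on $\mathbb{H}$ and $\nu_-$ on $\overline{\mathbb{H}}$; let $\nu$ be the coefficient on $\C$ equal to $\nu_\pm$ on $\mathbb{H}$, $\overline{\mathbb{H}}$ and to $0$ on $\R$. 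Lifting to a cover and transporting a Beltrami differential by a fixed biholomorphism are $\C$-linear, isometric for $\|\cdot\|_\infty$, and preserve the $W^{l,\infty}_{\textrm{loc}}$ semi-norms, while gluing along $\R$ is bounded $\C$-linear into $L^\infty(\C)\cap W^{l,\infty}_{\textrm{loc}}(\C\setminus\R)$ with $\|\nu\|_\infty=\max(\|\nu_+\|_\infty,\|\nu_-\|_\infty)<1$; hence $\ccpair\mapsto\nu$ is holomorphic into $L^\infty_1(\C)\cap W^{l,\infty}_{\textrm{loc}}(\C\setminus\R)$. In the proof of Bers' theorem (Section~\ref{subsection: Bers theorem}) one sees that $f^\nu\colon\C\to\C$ maps $\mathbb{H}$, $\overline{\mathbb{H}}$ homeomorphically onto Jordan domains $\Omega_+$, $\Omega_-\subset\C$ (they omit $\infty$ because $f^\nu$ fixes it), and that $\vb*{f_+}\ccpair=f^\nu\circ\psi_0$, $\vb*{\overline{f_-}}\ccpair=f^\nu\circ\overline{\psi_0}$: indeed these maps are normalized (since $f^\nu$ fixes $0,1,\infty$), biholomorphic onto $\Omega_\pm$ (the Beltrami coefficients cancel upon composition), and equivariant for the common $\PSL(2,\C)$-representation $\gamma\mapsto f^\nu\circ\rho_0(\gamma)\circ(f^\nu)^{-1}$ (Möbius because $\nu$ is $\rho_0$-invariant), so they coincide with the Bers maps by uniqueness in Theorem~\ref{theorem: Bers theorem}.

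Next, fix $\ccpair$ and set $k=\max(\|\nu_+\|_\infty,\|\nu_-\|_\infty)<1$. Since $p\mapsto N_p$ is continuous with $N_2=1$ (Proposition~\ref{beurlingprop}), choose $p>2$ with $1/N_p>k$; when $l=\infty$ this step is unnecessary---Remark~\ref{rem: Cinftycase} provides the analogue of Theorem A on the full unit ball---and $W^{l,p}_{\textrm{loc}}$ is then everywhere read as $C^\infty$. On the open neighbourhood $U=\{\max(\|\nu_+\|_\infty,\|\nu_-\|_\infty)<1/N_p\}$ of $\ccpair$, Theorem A with $\Omega=\C\setminus\R$ gives $f^\nu\in W^{l+1,p}_{\textrm{loc}}(\C\setminus\R)$ depending holomorphically on $\nu$. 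Restriction to $\mathbb{H}$ and $\overline{\mathbb{H}}$ followed by precomposition with the fixed real-analytic diffeomorphisms $\psi_0$, $\overline{\psi_0}$ is continuous $\C$-linear, so $f_1=f^\nu\circ\psi_0$ and $\overline{f_2}=f^\nu\circ\overline{\psi_0}$ are $\C$-valued functions in $W^{l+1,p}_{\textrm{loc}}(\widetilde S)$ depending holomorphically on $\ccpair\in U$, and $f_1-\overline{f_2}$ is nowhere zero because $\Omega_+\cap\Omega_-=\emptyset$, with $\inf_K|f_1-\overline{f_2}|>0$ on every compact $K\subset\widetilde S$ locally uniformly on $U$.

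Now I assemble the metric: by \eqref{eq: holo metric}, in the affine coordinate $z$ on $\CP^1$,
\[
\hpair=F^*\inners=-\frac{4}{(f_1-\overline{f_2})^2}\,df_1\cdot d\overline{f_2}\,.
\]
Because $l+1\ge2$, $p>2$, and $\widetilde S$ has real dimension two, $W^{l+1,p}_{\textrm{loc}}\hookrightarrow C^0_{\textrm{loc}}$, $W^{l,p}_{\textrm{loc}}$ is a Fréchet algebra, and composition with a holomorphic function is a holomorphic operation on these spaces; hence $\tfrac{1}{(f_1-\overline{f_2})^2}$, being the composition of $f_1-\overline{f_2}$ with $z\mapsto z^{-2}$ on $\C\setminus\{0\}$, lies in $W^{l+1,p}_{\textrm{loc}}(\widetilde S)$ and depends holomorphically on $f_1-\overline{f_2}$, the derivatives $df_1$, $d\overline{f_2}$ depend continuously $\C$-linearly on $f_1$, $\overline{f_2}$ with coefficients in $W^{l,p}_{\textrm{loc}}$, and the product has coefficients in $W^{l,p}_{\textrm{loc}}$. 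Thus $\ccpair\mapsto\hpair$ is holomorphic from $U$ into the complex-valued symmetric bilinear forms on $\C T\widetilde S$ with $W^{l,p}_{\textrm{loc}}$ coefficients. The equivariance of $F$ makes $\hpair$ a $\pi_1(S)$-invariant form, so it descends to such a form on $\C T S$ (the covering induces an isomorphism of Fréchet spaces between $\pi_1(S)$-invariant $W^{l,p}_{\textrm{loc}}$ forms on $\widetilde S$ and $W^{l,p}_{\textrm{loc}}$ forms on $S$), and since $W^{l,p}_{\textrm{loc}}\hookrightarrow W^{l,2}_{\textrm{loc}}$ continuously for $p\ge2$ we obtain a holomorphic map $U\to\mathcal S^l(S)$. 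As the $U$ cover $\mathcal C^l(S,c_0)\times\mathcal C^l(\overline S,\overline{c_0})$ and the target $\mathcal S^l(S)$ is fixed, the Bers metric map restricts to a holomorphic map into $\mathcal S^l(S)$.

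The one substantial ingredient is Theorem A; the remainder is soft. I expect the main delicacies in the assembly to be two. First, identifying the Bers maps with $f^\nu\circ\psi_0$ and $f^\nu\circ\overline{\psi_0}$ for the \emph{glued} coefficient $\nu$, which is regular (in $W^{l,\infty}_{\textrm{loc}}$) only away from the gluing line $\R$: this forces the choice $\Omega=\C\setminus\R$ in Theorem A and uses crucially that $\psi_0$ and $\overline{\psi_0}$ take values in $\mathbb{H}$ and $\overline{\mathbb{H}}$ rather than all of $\C$. Second, the gap between the full unit ball---where Bers' theorem operates---and the ball of radius $1/N_p<1$---where Theorem A supplies holomorphic dependence---which forces the choice of $p$ to be made locally near each point and is harmless only because $N_p\to1$ as $p\to2^+$.
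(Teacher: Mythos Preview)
Your proposal is correct and follows essentially the same route as the paper: glue the two Beltrami forms into a single coefficient on $\C$, apply Theorem A with $\Omega=\C\setminus\R=\mathbb{H}\cup\overline{\mathbb{H}}$, choose $p>2$ locally so that $\eta=1/N_p$ exceeds the $L^\infty$ norm at hand, decompose the pullback metric as a product of $\lambda\circ F$ with the differentials, and finish via the continuous inclusion $W^{l,p}_{\textrm{loc}}\hookrightarrow W^{l,2}_{\textrm{loc}}$. The only notable difference is packaging: the paper works directly on $\mathbb{H}$ via the map $\beta\mapsto(z\mapsto(\beta|_{\mathbb H}(z),\beta|_{\overline{\mathbb H}}(\overline z)))$ into $W^{l+1,p}_{\textrm{loc}}(\mathbb{H},\C\times\C\setminus\Delta)$ rather than precomposing with $\psi_0,\overline{\psi_0}$ on $\widetilde S$, but this is equivalent. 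One point to flag: the assertion that ``composition with a holomorphic function is a holomorphic operation on these spaces'' is exactly the content of the paper's Proposition~\ref{prop: loc composition is holo}, which the authors prove in some detail (Lemmas~\ref{lemma: composition bounded} and~\ref{lemma: composition holomorphic}) after remarking that they could not locate a sufficiently close statement in the literature; so while your invocation is correct, this step carries more weight than your phrasing suggests.
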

In \cite[Section 6]{BEE}, it is proved that, for $S$ closed and $l=\infty$, the restriction admits a continuous right inverse from an open subset of $\mathcal{S}^{\infty}(S)$ whose points are complex metrics satisfying a non-degeneracy condition called positivity. 
 
In the recent work \cite{ElSa}, one encounters complex elliptic operators involving the Laplacians of Bers metrics. These Laplacians are also considered in \cite{Kim} (compare Theorem B with \cite[Theorem 1.2 (1)]{Kim}). One needs holomorphicity results as above to carry out certain arguments for these operators; for instance, in order to use the holomorphic implicit function theorem. We expect to use Theorems A and B in works to come, and we hope they will be useful to others as well.

\subsection*{Acknowledgements}
We thank Kari Astala for helpful correspondence.  N.S. is funded by the FNR grant O20/14766753, \textit{Convex Surfaces in Hyperbolic Geometry}. At the time of submitting the first version, C.E. was funded by the same FNR grant. C.E. is currently funded by the European Union (ERC, \textit{GENERATE}, 101124349). Views and opinions expressed are however those of the author(s) only and do not necessarily reflect those of the European Union or the European Research Council Executive Agency. Neither the European Union nor the granting authority can be held.

\section{Proof of Theorem A}

To prove Theorem A, we first establish a family of elliptic estimates for the Beltrami operator (Theorem \ref{theorem estimates} below). We then step through the proof from \cite{AB} of the case $l=0$, but use Theorem \ref{theorem estimates} to bootstrap certain estimates along the way.

If a constant $C$ depends on quantities $a_1,\dots, a_n$, we write $C(a_1,\dots, a_n)$. We allow constants to change in the course of a proof. Given a multi-index $\alpha=(\alpha_1,\dots, \alpha_n)$, $\alpha_i\in\mathbb{Z}_{\geq 0}$, we use $|\alpha|$ for the length and $\partial^\alpha$ for the corresponding derivative. As well,  over a domain $\Omega\subset \C,$ we use $||\cdot||_{L^p(\Omega)}$ and $||\cdot||_{W^{k,p}(\Omega)}$ for the $L^p$ and Sobolev norms. When $\Omega=\C,$ we just write $||\cdot||_p$ and $||\cdot||_{k,p}.$

\subsection{Elliptic estimates}\label{sec: elliptic estimates}
The \textbf{Beurling transform} $T$ is defined on $C_0^\infty(\mathbb{C})$ by the principal value integral $$T(u)(z) =\lim_{\varepsilon\to 0} -\frac{1}{\pi}\int_{|\zeta-z|>\varepsilon}\frac{u(\zeta)}{(\zeta-z)^2}dxdy,$$ and the \textbf{Cauchy transform} $P,$ defined on $L^p(\C),$ $2<p<\infty,$ by $$P(u)(z) = -\frac{1}{\pi}\int_{\mathbb{C}}u(\zeta)\Big (\frac{1}{\zeta-z} -\frac{1}{\zeta}\Big) dxdy.$$ 
Via the Calderon-Zygmund theory, the continuity properties of $T$ are well understood. 
\begin{prop}\label{beurlingprop}
    For all $1<p<\infty$, $T$ extends to a continuous linear operator $L^p(\C)\to L^p(\C)$, which is an isometry when $p=2$. The operator norm of $T: L^p(\C)\to L^p(\C)$ varies continuously with $p.$
\end{prop}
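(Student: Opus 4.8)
The plan is the classical Calderon-Zygmund argument: the Fourier transform handles the $L^2$ endpoint, the Calderon-Zygmund theorem gives $L^p$-boundedness for $1<p<\infty$, and Riesz-Thorin interpolation gives the continuity statement. First I would identify $T$ on $C_0^\infty(\C)$ as a Fourier multiplier: writing $\xi$ for the complex frequency variable, $T$ acts on the Fourier side by multiplication by $\overline\xi/\xi$ (equivalently, $T$ intertwines $\partial_{\overline z}$ and $\partial_z$), a symbol of modulus $1$. Hence Plancherel gives $\|Tu\|_2=\|u\|_2$ for all $u\in C_0^\infty(\C)$, and by density of $C_0^\infty(\C)$ in $L^2(\C)$ the operator $T$ extends to an isometry of $L^2(\C)$ (indeed a unitary operator); in particular $N_2=1$.

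Next, for general $1<p<\infty$, I would invoke Calderon-Zygmund theory. The convolution kernel $K(z)=-1/(\pi z^2)$ is smooth away from $0$, homogeneous of degree $-2$, and has vanishing average over circles centered at the origin; in particular it satisfies the H\"ormander regularity condition. Combined with the $L^2$ bound just established, the Calderon-Zygmund theorem produces, for each $p\in(1,\infty)$, a finite constant $N_p$ with $\|Tu\|_p\le N_p\|u\|_p$ on $C_0^\infty(\C)$, and density of $C_0^\infty(\C)$ in $L^p(\C)$ gives the unique continuous extension $T\colon L^p(\C)\to L^p(\C)$. If one prefers not to cite the $L^p$ statement as a black box, the substance is a weak-type $(1,1)$ bound via the Calderon-Zygmund decomposition, Marcinkiewicz interpolation against the $L^2$ bound to cover $1<p\le 2$, and duality — the formal adjoint of $T$ being a Beurling-type operator with the same $L^p$ operator norms, whence also $N_p=N_{p'}$ for conjugate exponents — to cover $2\le p<\infty$.

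Finally, for continuity of $p\mapsto N_p$: the Riesz-Thorin theorem gives $N_p\le N_{p_0}^{1-\theta}N_{p_1}^{\theta}$ whenever $1/p=(1-\theta)/p_0+\theta/p_1$, so $t\mapsto\log N_{1/t}$ is a finite convex function on the open interval $(0,1)$ and hence continuous there; reparametrizing, $p\mapsto N_p$ is continuous on $(1,\infty)$. The only genuinely non-formal ingredient is the Calderon-Zygmund $L^p$ estimate — equivalently, the weak-$(1,1)$ bound — which is classical for a homogeneous kernel such as $-1/(\pi z^2)$; the $L^2$ computation, the density extension, and the convexity argument are all routine, so I anticipate no real difficulty.
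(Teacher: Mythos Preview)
Your proof is correct and follows the standard Calderon--Zygmund route. The paper itself does not give a proof of this proposition at all: it simply cites \cite[Chapter 5]{Ahl} as a source. So there is nothing to compare against beyond noting that your sketch is essentially the argument one would find in such a reference --- Fourier multiplier for the $L^2$ isometry, Calderon--Zygmund kernel estimates (or weak-$(1,1)$ plus Marcinkiewicz and duality) for $L^p$ boundedness, and log-convexity of $t\mapsto N_{1/t}$ from Riesz--Thorin for the continuity of $p\mapsto N_p$.
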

For a source, see \cite[Chapter 5]{Ahl}. Recall that we're using $N_p$ to denote the norm of $T:L^p(\C)\to L^p(\C).$ For the Cauchy transform, we take note of the following results, which can be verified immediately. 
\begin{prop}\label{Cauchyprop}
For $2<p<\infty$ and $u\in L^p(\C),$ in the distributional sense, $\partial_{\overline z}(P(u))=u$, and $\partial_z(P(u))=Tu.$
\end{prop}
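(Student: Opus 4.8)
The plan is to reduce to the case $u\in C_0^\infty(\C)$, where both identities come from the Cauchy--Pompeiu formula and an integration by parts, and then pass to the limit using Proposition \ref{beurlingprop}. First I would record that, writing $E(w)=\tfrac{1}{\pi w}$,
\[
P(u)(z)=(u*E)(z)+c_u,\qquad c_u:=\tfrac1\pi\int_\C\frac{u(\zeta)}{\zeta}\,dx\,dy,
\]
and that, for $u\in L^p(\C)$ with $2<p<\infty$, the kernel $\zeta\mapsto\frac{1}{\pi(z-\zeta)}-\frac{1}{\pi\zeta}$ belongs to $L^q(\C)$ for every $q\in(1,2)$ with norm bounded locally uniformly in $z$; choosing $q\ge p'$ (possible since $p'<2$) and applying H\"older's inequality shows both that $c_u$ and $u*E$ make sense and that $P\colon L^p(\C)\to L^\infty_{\mathrm{loc}}(\C)$ is continuous. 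Since $c_u$ is a constant it is annihilated by $\partial_z$ and $\dbar$, so it suffices to differentiate $u*E$.

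For $u\in C_0^\infty(\C)$ the function $u*E$ is smooth and one may differentiate the compactly supported factor, $\partial(u*E)=(\partial u)*E$ for $\partial\in\{\partial_z,\dbar\}$. For $\partial=\dbar$ this gives $\dbar P(u)(z)=-\tfrac1\pi\int_\C\frac{\dbar u(\zeta)}{\zeta-z}\,dx\,dy=u(z)$ by Cauchy--Pompeiu. For $\partial=\partial_z$ it gives $\partial_z P(u)(z)=-\tfrac1\pi\int_\C\frac{\partial_z u(\zeta)}{\zeta-z}\,dx\,dy$; integrating by parts over $\C\setminus B_\varepsilon(z)$ and sending $\varepsilon\to0$ transfers the derivative onto $(\zeta-z)^{-1}$, the boundary integrals over $\{|\zeta-z|=\varepsilon\}$ tending to $0$ because $u$ is smooth near $z$ and $\int_{\varepsilon<|\zeta-z|<R}(\zeta-z)^{-2}\,dx\,dy=0$ for all $0<\varepsilon<R$; what remains is the principal-value integral $\lim_{\varepsilon\to0}-\tfrac1\pi\int_{|\zeta-z|>\varepsilon}\frac{u(\zeta)}{(\zeta-z)^2}\,dx\,dy$, which is precisely $Tu(z)$.

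For general $u\in L^p(\C)$, $2<p<\infty$, take $u_n\in C_0^\infty(\C)$ with $u_n\to u$ in $L^p(\C)$. By the continuity of $P$ just noted, $P(u_n)\to P(u)$ locally uniformly, hence in $\mathcal D'(\C)$; by Proposition \ref{beurlingprop}, $\partial_z P(u_n)=Tu_n\to Tu$ in $L^p(\C)$; and $\dbar P(u_n)=u_n\to u$ in $L^p(\C)$. Passing to the limit in $\mathcal D'(\C)$ yields $\dbar P(u)=u$ and $\partial_z P(u)=Tu$.

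The one point requiring care is the principal value in the $\partial_z$ computation: since $(\zeta-z)^{-2}$ is not locally integrable, the integration by parts must be carried out on the punctured domain, and one must check that the $\varepsilon\to0$ limit is exactly the truncation defining $T$, with no leftover multiple of $u(z)$ --- which is the cancellation expressed by the vanishing mean of $(\zeta-z)^{-2}$ on annuli. Everything else is routine bookkeeping.
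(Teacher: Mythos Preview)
The paper does not prove this proposition; it only remarks that the identities ``can be verified immediately.'' Your argument is a correct and standard verification: reduce to $C_0^\infty(\C)$ via Cauchy--Pompeiu and an integration-by-parts computation of the principal value, then pass to the limit using the continuity of $P:L^p\to L^\infty_{\mathrm{loc}}$ and of $T:L^p\to L^p$.

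One small slip worth flagging: for general $u\in L^p(\C)$ with $p>2$, the two pieces $c_u$ and $u*E$ need \emph{not} be separately well-defined, contrary to what you assert. The function $\zeta\mapsto 1/\zeta$ lies in no $L^q(\C)$ (it fails near $0$ for $q\ge 2$ and at infinity for $q\le 2$), so H\"older's inequality does not give $c_u$ on its own; only the \emph{combined} kernel $\frac{1}{\zeta-z}-\frac{1}{\zeta}\sim z/\zeta^2$ at infinity belongs to $L^{p'}(\C)$. This does not damage the proof, however: you only actually use the splitting $P(u)=u*E+c_u$ in the $C_0^\infty$ step, where it is valid, and your density argument works directly with $P(u_n)\to P(u)$ rather than with the pieces.
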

Our elliptic estimates are as follows. For $r>0,$ let $\mathbb{D}_r=\{z\in\C: |z|<r\}.$
\begin{theorem}
\label{theorem estimates}
Let $\Omega\subset \C$ be an open subset, and let $\mu\in L_1^\infty(\Omega)\cap W_{\textrm{loc}}^{k,\infty}(\Omega)$. For $k\in\mathbb{Z}_{\geq 0}$ and $p$ such that $||\mu||_\infty N_p<1$, let $u\in L_{\mathrm{loc}}^{p}(\Omega)$, and $v\in W_{\mathrm{loc}}^{k,p}(\Omega)$, with $u$ satisfying
    \begin{equation}
        \Big ({\partial}_{\overline{z}}- \mu{\partial}_{ z}\Big )u = v
    \end{equation}
in the weak sense. Then $u\in W_{\mathrm{loc}}^{k+1,p}(\Omega)$, and for all $r<R<\infty$ such that $\mathbb{D}_R\subset \Omega$, there exists $C=C(||\mu||_{W^{k,\infty}(\mathbb{D}_R)},r,R,k,p)$ such that 
$$
    ||u||_{W^{k+1,p}(\mathbb{D}_r)}\leq C(||u||_{W^{k,p}(\mathbb{D}_R)}+||v||_{W^{k,p}(\mathbb{D}_R)}).$$

\end{theorem}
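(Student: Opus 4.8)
The plan is to argue by induction on $k$, localizing throughout. Fix radii $r < r' < R$ with $\mathbb{D}_R \subset \Omega$ and cutoffs $\chi, \chi_1 \in C_0^\infty(\mathbb{D}_R)$ with $\chi \equiv 1$ on $\mathbb{D}_{r'}$ and $\chi_1 \equiv 1$ near $\operatorname{supp}\chi$. Put $w = \chi u$ and $\mu_0 = \chi_1 \mu$; from the equation for $u$ one gets that $w$ is compactly supported in $\mathbb{D}_R$ and
\begin{equation*}
\big(\partial_{\overline z} - \mu_0\,\partial_z\big)w = \tilde v := \chi v + (\partial_{\overline z}\chi)\,u - \mu\,(\partial_z\chi)\,u ,
\end{equation*}
where $\mu_0 \in W^{k,\infty}(\C)$ is compactly supported, $\|\mu_0\|_\infty \le \|\mu\|_\infty$ (so $\|\mu_0\|_\infty N_p < 1$), and $\|\mu_0\|_{W^{k,\infty}(\C)} \le C(r',R,k)\|\mu\|_{W^{k,\infty}(\mathbb{D}_R)}$. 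The key point is that once $w$ is known to lie in $W^{1,p}$ with compact support, the function $w - P(\partial_{\overline z}w)$ is entire and tends to a constant at infinity, hence is constant; so by Proposition \ref{Cauchyprop}, $\partial_z w = T\phi$ where $\phi := \partial_{\overline z}w \in L^p(\C)$, and the equation becomes $\phi - \mu_0 T\phi = \tilde v$. Since $\|\mu_0\|_\infty N_p < 1$, the operator $I - \mu_0 T$ is invertible on $L^p(\C)$ with inverse norm at most $(1 - \|\mu_0\|_\infty N_p)^{-1}$, so $\phi = (I - \mu_0 T)^{-1}\tilde v$.

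For $k = 0$, the statement $w \in W^{1,p}(\C)$ --- equivalently $u \in W^{1,p}_{\mathrm{loc}}(\Omega)$ --- is the classical interior regularity for the Beltrami equation under the condition $\|\mu\|_\infty N_p < 1$ (see \cite[\S 5.7]{Ast}, \cite{AB}). Granting this, the estimate is immediate: $\|\phi\|_p \le (1-\|\mu_0\|_\infty N_p)^{-1}\|\tilde v\|_p$, $\|\partial_z w\|_p = \|T\phi\|_p \le N_p\|\phi\|_p$, and $\|\tilde v\|_p \le C(r',R)(\|v\|_{L^p(\mathbb{D}_R)} + (1+\|\mu\|_{L^\infty(\mathbb{D}_R)})\|u\|_{L^p(\mathbb{D}_R)})$. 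For the inductive step I would assume the theorem for $k - 1$. Applied to $u$ it gives $u \in W^{k,p}_{\mathrm{loc}}(\Omega)$ (the hypotheses hold since $W^{k,p}_{\mathrm{loc}} \subset W^{k-1,p}_{\mathrm{loc}}$ and $W^{k,\infty}_{\mathrm{loc}} \subset W^{k-1,\infty}_{\mathrm{loc}}$); thus $w \in W^{k,p}(\C)$, $\phi \in W^{k-1,p}(\C)$, and, by the Leibniz rule, $\tilde v \in W^{k,p}(\C)$. It then suffices to upgrade $\phi = (I - \mu_0 T)^{-1}\tilde v$ to $W^{k,p}(\C)$ with $\|\phi\|_{W^{k,p}} \le C(\|\mu_0\|_{W^{k,\infty}}, p, k)\|\tilde v\|_{W^{k,p}}$: indeed then $\partial_{\overline z}w = \phi$ and $\partial_z w = T\phi$ lie in $W^{k,p}(\C)$ ($T$ being bounded $W^{k,p}(\C) \to W^{k,p}(\C)$ with norm $N_p$, as it commutes with translations), so $w \in W^{k+1,p}_{\mathrm{loc}}$, and expanding $\|\tilde v\|_{W^{k,p}} \le C(r',R,k)(\|v\|_{W^{k,p}(\mathbb{D}_R)} + \|\mu\|_{W^{k,\infty}(\mathbb{D}_R)}\|u\|_{W^{k,p}(\mathbb{D}_R)})$ gives the claimed bound on $\|u\|_{W^{k+1,p}(\mathbb{D}_r)}$. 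Running the same argument near every point of $\Omega$ yields $u \in W^{k+1,p}_{\mathrm{loc}}(\Omega)$.

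To prove $\phi \in W^{k,p}(\C)$ I would induct on $j = 1, \dots, k$, showing $\phi \in W^{j,p}(\C)$. For $j = 1$ the tool is difference quotients $\Delta_h g(z) = h^{-1}(g(z + he) - g(z))$, $e$ a unit vector: since $T$ commutes with translations and $\Delta_h(\mu_0 T\phi) = \mu_0(\cdot + he)\,\Delta_h(T\phi) + (\Delta_h\mu_0)\,(T\phi)$, applying $\Delta_h$ to $\phi - \mu_0 T\phi = \tilde v$ gives
\begin{equation*}
\big(I - \mu_0(\cdot + he)\,T\big)\,\Delta_h\phi = \Delta_h\tilde v + (\Delta_h\mu_0)\,(T\phi),
\end{equation*}
and since $\|\mu_0(\cdot + he)\|_\infty = \|\mu_0\|_\infty$ the operator on the left is invertible on $L^p(\C)$ uniformly in $h$; this yields $\|\Delta_h\phi\|_p \le (1-\|\mu_0\|_\infty N_p)^{-1}(\|\nabla\tilde v\|_p + \|\nabla\mu_0\|_\infty N_p\|\phi\|_p)$ uniformly in $h$, hence $\phi \in W^{1,p}(\C)$ with the same bound on $\|\nabla\phi\|_p$. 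For $1 < j \le k$, assuming $\phi \in W^{j-1,p}(\C)$, one can now differentiate $\phi - \mu_0 T\phi = \tilde v$ honestly up to order $j-1$ (the Leibniz rule applies since all products pair $W^{j-1,\infty}$ coefficients with $W^{j-1,p}$ functions), obtaining for each multi-index $\alpha$ with $|\alpha| = j-1$ an equation $(I - \mu_0 T)(\partial^\alpha\phi) = \partial^\alpha\tilde v + \sum_{0 < \beta \le \alpha}\binom{\alpha}{\beta}(\partial^\beta\mu_0)\,T(\partial^{\alpha - \beta}\phi)$ whose right side lies in $W^{1,p}(\C)$ (using $\tilde v \in W^{k,p}$, $\mu_0 \in W^{k,\infty}$, $\phi \in W^{j-1,p}$ and $j \le k$); the $j = 1$ argument then gives $\partial^\alpha\phi \in W^{1,p}(\C)$, so $\phi \in W^{j,p}(\C)$, with norms controlled as needed.

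The main obstacle is the mismatch between the (low) regularity of $\mu$ and the regularity one wants for $u$: one cannot simply differentiate $(\partial_{\overline z} - \mu\,\partial_z)u = v$, because for merely Lipschitz $\mu$ the term $\mu\,\partial_z(\partial u)$ would pair a genuine distribution with a non-smooth coefficient. This is what forces the difference-quotient argument at the bottom of the ladder, and it is also why one must invert $I - \mu_0 T$ on $L^p(\C)$ rather than hope for a $W^{k,p}$ Neumann series --- the latter need not converge, since $\|\mu_0\|_{W^{k,\infty}} N_p$ can exceed $1$. Everything else (the weak-form manipulations, the Leibniz and cutoff estimates, and the bookkeeping of constants in terms of $\|\mu\|_{W^{k,\infty}(\mathbb{D}_R)}$, $r$, $R$, $k$, $p$) is routine but needs to be carried out with some care.
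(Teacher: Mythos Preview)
Your argument is correct. Both your proof and the paper's rest on the same core computation: localize, write the integral equation $\phi - \mu_0 T\phi = \tilde v$ (the paper's $h - \mu Th = v$), differentiate it via the Leibniz rule so that the highest-order piece isolates as $(I - \mu_0 T)(\partial^\alpha\phi)$ plus lower-order commutators, and invert $I - \mu_0 T$ on $L^p$ using $\|\mu_0\|_\infty N_p < 1$. The difference is in how the differentiation is justified. The paper first proves the a priori estimate for $u, v \in C_0^\infty$ (Lemma~\ref{C_0inftycase}), where one may differentiate freely, and then invokes the standard approximation-by-smooth-functions machinery to pass to general $u$, citing a textbook for that step. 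You instead work directly with the low-regularity $u$: difference quotients secure the first weak derivative of $\phi$, and only afterwards do you differentiate honestly, once enough regularity is in hand. Your route is more self-contained and bypasses the mollification step; the paper's is shorter on the page because it outsources that step. These are the two classical routes to interior elliptic regularity, and neither is deeper than the other.
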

Theorem \ref{theorem estimates} is certainly well known to experts, although we couldn't locate a precise statement. For reference, in Lemmas 5.21 and 5.22 of \cite{Ast}, it is explained how to obtain higher regularity for $u\in W^{1,p}$, assuming $\mu,v\in C_0^\infty(\C)$. As well, Schauder estimates for the Beltrami equation are proved in Chapter 15 of \cite{Ast}.

As is typically the case with interior elliptic estimates, it suffices to first establish the $C_0^\infty$ case, and then one can conclude via standard approximation arguments.

 \begin{lemma}\label{C_0inftycase}
Let $k\in\mathbb{Z}_{\geq 0}$ and let $u,v$ be $C_0^\infty(\C)$ functions 
   with support in a relatively compact open subset $U\subset \C$. Assume that $\mu\in L_1^\infty(U)\cap W^{k,\infty}(U),$ and that $u$ and $v$ satisfy 
   \begin{equation}\label{eqn: belt}
       \Big ({\partial}_{\overline{z}}- \mu{\partial}_{ z}\Big )u = v.
   \end{equation} 
For $p$ such that $||\mu||_\infty N_p<1$, there exists $C=C(||\mu||_{W^{k,\infty}(U)},k,p)$  such that
$$ ||u||_{k+1,p}\leq C(||u||_{k,p}+||v||_{k,p}).$$

\end{lemma}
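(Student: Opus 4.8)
The plan is to prove the estimate by induction on $k$, using the Cauchy and Beurling transforms to convert the Beltrami equation into a fixed-point equation, and then differentiating that equation to feed the inductive hypothesis.

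\textbf{Base case $k=0$.} Since $u\in C_0^\infty(\C)$ with support in $U$, we may write $u = P(\partial_{\overline z}u) + h$ where $h$ is entire; but $u$ has compact support, so in fact $u = P(\partial_{\overline z}u)$ (the difference is a bounded entire function vanishing at infinity, hence zero, using $\partial_{\overline z}u\in L^p$ and $p>2$). Set $\omega = \partial_{\overline z}u = \mu\partial_z u + v$. From Proposition \ref{Cauchyprop}, $\partial_z u = \partial_z P(\omega) = T\omega = T(\mu\,\partial_z u) + T v$, so $(\id - T\mu)(\partial_z u) = Tv$. Because $\|\mu\|_\infty N_p < 1$, the operator $T\mu$ has operator norm $<1$ on $L^p(\C)$, so $\id - T\mu$ is invertible with $\|(\id-T\mu)^{-1}\|\leq (1-\|\mu\|_\infty N_p)^{-1}$. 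Hence $\|\partial_z u\|_p \leq (1-\|\mu\|_\infty N_p)^{-1} N_p \|v\|_p$, and then $\|\partial_{\overline z}u\|_p = \|\mu\partial_z u + v\|_p \leq \|\mu\|_\infty\|\partial_z u\|_p + \|v\|_p$ is controlled as well. This gives $\|u\|_{1,p}\leq C(\|u\|_p + \|v\|_p)$ with $C = C(\|\mu\|_\infty, p)$ — in fact one gets the cleaner bound in terms of $\|v\|_p$ alone, but keeping $\|u\|_p$ on the right is harmless and matches the statement.

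\textbf{Inductive step.} Assume the estimate holds for $k-1$ (for all admissible data), and suppose $\mu\in W^{k,\infty}(U)$, $v\in W^{k,p}$, $u\in C_0^\infty$ solve \eqref{eqn: belt}. Apply a first-order derivative $\partial = \partial_z$ or $\partial_{\overline z}$ to the equation: $(\partial_{\overline z} - \mu\partial_z)(\partial u) = \partial v + (\partial \mu)(\partial_z u) =: \tilde v$. Now $\partial u\in C_0^\infty$, and $\tilde v\in W^{k-1,p}$ provided $\partial_z u\in W^{k-1,p}$ — which we already know from the base case (giving $\partial_z u\in W^{1,p}$, enough to start) combined with lower-order instances of the inductive hypothesis, or more cleanly by first running the induction to get $u\in W^{k,p}$ and hence $\partial_z u\in W^{k-1,p}$. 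The coefficient of the equation for $\partial u$ is still $\mu$, with $\|\mu\|_\infty N_p<1$, so the inductive hypothesis applied to $\partial u$ yields
$$\|\partial u\|_{W^{k,p}} \leq C\big(\|\partial u\|_{W^{k-1,p}} + \|\tilde v\|_{W^{k-1,p}}\big),$$
with $C = C(\|\mu\|_{W^{k-1,\infty}(U)}, k, p)$. Summing over the two choices of $\partial$ gives control of all derivatives of $u$ of order between $1$ and $k+1$, i.e. $\|u\|_{k+1,p} \lesssim \|u\|_{k,p} + \|\tilde v\|_{W^{k-1,p}}$. It remains to bound $\|\tilde v\|_{W^{k-1,p}} = \|\partial v + (\partial\mu)(\partial_z u)\|_{W^{k-1,p}}$: the first term is $\leq \|v\|_{W^{k,p}}$; for the second, use the algebra/multiplication property $\|fg\|_{W^{k-1,p}(\mathbb D_\rho)} \leq C\|f\|_{W^{k-1,\infty}}\|g\|_{W^{k-1,p}}$ (valid since $L^\infty\cap W^{k-1,\infty}$ multiplies $W^{k-1,p}$ continuously), with $f = \partial\mu \in W^{k-1,\infty}(U)$ and $g = \partial_z u\in W^{k-1,p}$, the latter bounded by $\|u\|_{k,p} \leq C(\|u\|_{0,p}+\|v\|_{0,p})$ via the already-completed induction up to order $k-1$. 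Reabsorbing everything produces $\|u\|_{k+1,p}\leq C(\|\mu\|_{W^{k,\infty}(U)}, k, p)(\|u\|_{k,p} + \|v\|_{k,p})$, as claimed.

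\textbf{Main obstacle.} The delicate points are bookkeeping rather than conceptual. One must be careful that at the inductive step the regularity of the new inhomogeneous term $\tilde v$ genuinely lies in $W^{k-1,p}$ — this requires knowing $\partial_z u\in W^{k-1,p}$ before invoking the hypothesis, so the induction should be organized to first upgrade $u$ to $W^{k,p}$ (hence $\partial_z u\in W^{k-1,p}$) and only then derive the quantitative $W^{k+1,p}$ bound; alternatively one proves the regularity statement "$u\in W^{k+1,p}$" and the estimate simultaneously by a single induction. The other point needing care is the multiplication estimate $W^{k-1,\infty}\cdot W^{k-1,p}\to W^{k-1,p}$, which follows from the Leibniz rule and Hölder but whose constant must be tracked so that it only depends on $\|\mu\|_{W^{k,\infty}(U)}$, $k$, and $p$; since all functions have support in the fixed relatively compact set $U$, no issue with the domain of the multiplication inequality arises.
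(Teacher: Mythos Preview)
Your argument is correct and the base case essentially matches the paper's (you invert $I-T\mu$ on $\partial_z u$, the paper inverts $I-\mu T$ on $h=\partial_{\overline z}u$; same estimate). The inductive step, however, is organized differently. The paper stays with the integral equation $h-\mu Th=v$: applying $\partial^\alpha$ and using that $T$ commutes with derivatives yields $\partial^\alpha h-\mu T(\partial^\alpha h)=\partial^\alpha v - Q(\mu,h)$ with $Q$ lower order, so one bounds $\|h\|_{k,p}$ directly by induction and only at the end passes to $\|u\|_{k+1,p}$ via a single appeal to elliptic regularity for $\partial_{\overline z}$. You instead differentiate the PDE itself and re-invoke the full lemma at level $k-1$ on $\partial u$. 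The paper's route has the small organizational advantage that the inhomogeneous side stays manifestly in the same function class throughout, whereas your $\tilde v=\partial v+(\partial\mu)\partial_z u$ is only $W^{k-1,p}$ and not $C_0^\infty$, so (as you correctly flag) the induction statement really needs to be phrased for $u\in C_0^\infty$ and $v\in W^{k,p}$ rather than both in $C_0^\infty$; this is harmless since neither your base case nor the inductive step ever uses smoothness of $v$ beyond its Sobolev norm. Your route in turn avoids the separate $\partial_{\overline z}$-elliptic-regularity step at the end.
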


\begin{proof}
Set $h={\partial}_{\overline{z}}u$, so that by Proposition \ref{Cauchyprop}, $u=Ph$ and ${\partial}_{ z}u=Th$. Using both of these identities, Equation (\ref{eqn: belt}) becomes the integral equation
\begin{equation}\label{integralequation}
    v = {\partial}_{\overline{z}}(Ph) - \mu{\partial}_{ z}(Ph) = h-\mu Th.
\end{equation}
We claim that for all $k\geq 0$, $$||h||_{k,p}\leq C(||\mu||_{W^{k,\infty}(U)},k,p)||v||_{k,p},$$ for $||\mu||_\infty N_p<1,$ and we prove the result by induction on $k.$ For $k=0$, taking $L^p$ norms on (\ref{integralequation}) gives $$||h||_{p} \leq ||v||_{p} + ||\mu||_\infty N_p||h||_{p},$$ and hence for $||\mu||_\infty N_p<1$, we can rearrange to find that $$||h||_p \leq (1-||\mu||_\infty N_p)^{-1}||v||_p.$$ For the induction step, let $\alpha$ be any multi-index with $|\alpha|=k$. Applying $\partial^\alpha$ to (\ref{integralequation}), which commutes with $T,$ we have 
\begin{equation}\label{diffdintegraleq}
    \partial^\alpha v = \partial^\alpha h -\mu T(\partial^\alpha h) + Q(\mu, h),
\end{equation}
where $Q$ is a polynomial in $\partial^\beta \mu$, $\partial^\gamma (Th)=T(\partial^\gamma h),$ for $0\leq |\gamma|<|\alpha|$, $1\leq |\beta|\leq |\alpha|$, $\beta+\gamma=\alpha,$ computed straightforwardly using the product rule. $||Q(\mu,h)||_p$ is rather crudely bounded: $$||Q(\mu,h)||_p \leq \sum_{\beta,\gamma}||\partial^\beta \mu||_{\infty}N_p||\partial^\gamma h||_p\leq C(k)||\mu||_{W^{k,\infty}(U)}N_p||h||_{k-1,p}.$$ Then, using the induction hypothesis $$||h||_{k-1,p}\leq C(||\mu||_{W^{k-1,\infty}(U)},k-1,p)||v||_{k-1,p},$$ together with the trivial bound $||\cdot||_{k-1,p}\leq ||\cdot||_{k,p}$, we extrapolate the final bound $$||Q(\mu,h)||_p\leq  C(||\mu||_{W^{k-1,\infty}(U)},k,p)||\mu||_{W^{k,\infty}(U)}N_p||v||_{k,p}.$$ Returning to (\ref{diffdintegraleq}), we deduce
$$||\partial^\alpha h||_p\leq ||\mu||_{\infty}N_p||\partial^\alpha h||_p +(C(||\mu||_{W^{k-1,\infty}(U)},k,p)||\mu||_{W^{k,\infty}(U)}N_p+1)||v||_{k,p}.$$ Under the assumption $||\mu||_{\infty}N_p<1$, doing the same rearrangement as in the $k=0$ step returns the desired bound for $||\partial^\alpha h||_p$. Summing up over all multi-indices of length $k$ completes the induction.

To finish the proof of the lemma, we turn our estimate on $h$ into an estimate on $u$ using elliptic regularity for ${\partial}_{\overline{z}}$ relative to disks containing the support of $u$. As a first order elliptic operator, the basic elliptic regularity estimate for ${\partial}_{\overline{z}}$ gives that for all $k\in\mathbb{Z}_{\geq 0}$, $1<p<\infty,$ and $u_0\in C_0^\infty(U),$ $$||u_0||_{W^{k+1,p}(U)}\leq C(U,k,p)(||u_0||_{W^{k,p}(U)}+||\partial_{\overline{z}} u_0||_{W^{k,p}(U)}).$$ Taking $u_0=u$, restricting to $p$ such that $||\mu||_{\infty}N_p<1$, and recalling that we defined $h=\partial_{\overline{z}}u$, we obtain 
$$||u||_{k+1,p}\leq C(||u||_{k,p}+||h||_{k,p})\leq C(||u||_{k,p}+||v||_{k,p}),$$ where $C=C(||\mu||_{W^{k,\infty}(U)},k,p)$ (note that we absorbed the dependence of $C$ on $U$ into the dependence on $||\mu||_{W^{k,\infty}(U)}$).
\end{proof}

\begin{proof}[Proof of Theorem \ref{theorem estimates}]
   The result follows swiftly from Lemma \ref{C_0inftycase} and standard approximation arguments, using the density of $C_0^\infty(\C)$ in all of the Sobolev spaces for disks. One can follow the proof of Theorem 10.3.6 and Corollary 10.3.10 in \cite{Ni}, as long as one remembers to restrict the range of $p$ to those such that $||\mu||_\infty N_p<1$. 
\end{proof}

\begin{remark}
   Theorem \ref{theorem estimates} shows that if $f$ is a locally quasiconformal map from a domain $\Omega$ with Beltrami coefficient in $W_{\textrm{loc}}^{k,\infty}(\Omega),$ then $f$ lies in $W_{\textrm{loc}}^{k+1,p}(\Omega)$ for admissible $p.$ The converse is not true: for $q>2,$ the function $f(z) = z +|z|^{2-\frac{2}{q}}$, in a suitably chosen neighbourhood of zero, is a diffeomorphism onto its image. For $p<q/2$, it lies in $W_{\textrm{loc}}^{2,p}$, but the norm of the derivative of the Beltrami coefficient behaves like $|z|^{-\frac{2}{q}}.$
\end{remark}

\subsection{Holomorphic dependence for $L^{\infty}\to W_{\textrm{loc}}^{1,p}$}
\label{holdep}
Moving toward the proof of Theorem A, we first explain the proof of the $l=0$ case from \cite{AB}. In this setting, $\Omega=\C$. For the rest of this section, we fix $1<p<\infty$ and a number $\delta<\frac{1}{N_p}=\eta$, and we assume that all Beltrami coefficients considered here have $L^{\infty}(\C)$ norm at most $\delta;$ this assumption makes the proofs a bit smooth. We explain the holomorphicity for $L_\delta^\infty(\C)\to W_{\textrm{loc}}^{1,p}(\C)$. Since $\delta<\eta$ is arbitrary, the holomorphicity will then hold for $L_\eta^\infty(\C)\to W_{\textrm{loc}}^{1,p}(\C)$ as well. 
\begin{lemma}[Lemma 21 in \cite{AB}]\label{lemma: 21}
Let $\mu\in L_1^\infty(\C)$, $a\in L^\infty(\C)$, and for a real or complex parameter $s$, let $\alpha(s)$ be a continuously varying family of $L^\infty(\C)$ functions such that $||\alpha(s)||_{\infty}$ is uniformly bounded in $s$, and $\lim_{s\to 0}||\alpha(s)||_{\infty}= 0.$ We assume that $s$ is chosen small enough so that $\mu(s)=\mu+sa+s\alpha(s)$ satisfies $||\mu(s)||_{\infty}\leq \delta$. Then,
 $$\lim_{s\to 0}\frac{f^{\mu+sa+s\alpha(s)}-f^\mu}{s}$$ exists in $W_{\textrm{loc}}^{1,p}(\C)$ and does not depend on $\alpha$, and hence defines an element of $W_{\textrm{loc}}^{1,p}(\C).$
\end{lemma}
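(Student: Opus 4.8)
The plan is to derive a concrete integral formula for the difference quotient and to pass to the limit using the elliptic estimates already in hand, following the scheme of \cite{AB}. Recall the classical representation: for $||\nu||_\infty \leq \delta < \eta = 1/N_p$, the map $f^\nu$ can be written as $f^\nu(z) = z + P(h_\nu)(z)$, where $h_\nu \in L^p(\C)$ is the unique solution of the singular integral equation $h_\nu = \nu + \nu T h_\nu$; equivalently $h_\nu = (I - \nu T)^{-1}\nu$, the Neumann series converging in $L^p(\C)$ because $||\nu T||_{L^p \to L^p} \leq \delta N_p < 1$. Here $\partial_{\overline z} f^\nu = h_\nu$ and $\partial_z f^\nu = 1 + T h_\nu$. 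So the first step is to reduce the statement about $f^\mu$ to the corresponding statement about $h_\mu$: since $P$ is a bounded linear map $L^p(\C) \to W^{1,p}_{\textrm{loc}}(\C)$ (indeed $\partial_{\overline z} P = \id$, $\partial_z P = T$ by Proposition \ref{Cauchyprop}), it suffices to show that $\lim_{s\to 0} s^{-1}(h_{\mu(s)} - h_\mu)$ exists in $L^p(\C)$ and is independent of $\alpha$.

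Next I would do the algebra on the singular integral equations. Writing $\mu(s) = \mu + sa + s\alpha(s)$ and subtracting the equation for $h_\mu$ from the one for $h_{\mu(s)}$, one gets
\begin{equation*}
h_{\mu(s)} - h_\mu = (\mu(s) - \mu) + (\mu(s)-\mu) T h_{\mu(s)} + \mu T(h_{\mu(s)} - h_\mu),
\end{equation*}
so that, with $\eps(s) := a + \alpha(s)$,
\begin{equation*}
\frac{h_{\mu(s)} - h_\mu}{s} = (I - \mu T)^{-1}\Big(\eps(s) + \eps(s)\, T h_{\mu(s)}\Big).
\end{equation*}
The operator $(I-\mu T)^{-1}$ is a fixed bounded operator on $L^p(\C)$. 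As $s\to 0$ we have $\alpha(s)\to 0$ in $L^\infty$, hence $\eps(s)\to a$ in $L^\infty$; and $h_{\mu(s)} \to h_\mu$ in $L^p(\C)$ because $(I-\mu(s)T)^{-1}$ depends continuously on $\mu(s)$ in operator norm on the relevant ball (uniform geometric series bound $\delta N_p < 1$). Therefore $\eps(s) T h_{\mu(s)} \to a\, T h_\mu$ in $L^p(\C)$ — using that multiplication by an $L^\infty$ function converging in $L^\infty$, applied to an $L^p$ function converging in $L^p$, converges in $L^p$. Passing to the limit gives
\begin{equation*}
\lim_{s\to 0}\frac{h_{\mu(s)}-h_\mu}{s} = (I-\mu T)^{-1}\big(a + a\, T h_\mu\big) = (I-\mu T)^{-1}\big(a(1 + T h_\mu)\big) = (I-\mu T)^{-1}\big(a\,\partial_z f^\mu\big),
\end{equation*}
which manifestly does not depend on $\alpha$. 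Applying $P$ then yields the limit in $W^{1,p}_{\textrm{loc}}(\C)$, proving the lemma.

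The one point needing genuine care — and the main obstacle — is the uniform convergence $h_{\mu(s)}\to h_\mu$ in $L^p(\C)$ together with the uniform operator bounds: one must check that all the Beltrami coefficients $\mu(s)$ stay in the ball $L^\infty_\delta(\C)$ (guaranteed by hypothesis for small $s$) so that $||\mu(s) T||_{L^p\to L^p} \leq \delta N_p < 1$ uniformly, which legitimizes both the Neumann series representation of $(I-\mu(s)T)^{-1}$ and its norm-continuity in $\mu(s)$; the estimate $||h_{\mu(s)} - h_\mu||_p \leq (1-\delta N_p)^{-1}\big(||\mu(s)-\mu||_\infty + ||\mu(s)-\mu||_\infty N_p ||h_{\mu(s)}||_p\big)$ then closes the argument once one notes $||h_{\mu(s)}||_p \leq (1-\delta N_p)^{-1}\delta$ is uniformly bounded. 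Everything here is at the level of $L^p$ and $L^\infty$ only; the higher elliptic estimates of Theorem \ref{theorem estimates} are not yet needed at this stage — they will enter later when bootstrapping to $W^{l+1,p}_{\textrm{loc}}(\Omega)$ — so the proof of this particular lemma is essentially the $L^p$-functional-analytic core of the Ahlfors–Bers argument reorganized around the contraction $\mu T$.
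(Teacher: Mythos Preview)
The paper does not give its own proof of this lemma; it is merely quoted from \cite{AB} as background for the $l=0$ case, so there is nothing in the paper to compare against directly. Your write-up is, in spirit, the classical Neumann-series argument one finds in Ahlfors' book and in \cite{AB} for the compactly supported case.

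There is, however, a genuine gap. The representation $f^\nu(z)=z+P(h_\nu)(z)$ with $h_\nu=(I-\nu T)^{-1}\nu\in L^p(\C)$ is \emph{not} valid for arbitrary $\nu\in L^\infty_\delta(\C)$: it requires $\nu\in L^p(\C)$ (in practice, compact support), since otherwise the right-hand side $\nu+\nu Th_\nu$ is not even in $L^p(\C)$ and the Neumann series has nothing to act on. For a concrete counterexample, take $\nu$ a small nonzero constant; then $f^\nu$ is $\R$-affine and $\partial_{\overline z}f^\nu$ is a nonzero constant, certainly not in $L^p(\C)$. A second, smaller issue is the normalization: $z+P(h_\nu)$ is the ``normal'' solution fixing $0$ and $\infty$, whereas the paper's $f^\nu$ fixes $0$ and $1$, so a scalar $1+P(h_\nu)(1)$ has to be divided out and carried through your difference-quotient computation. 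Both issues are standard and repairable---one first proves the result for compactly supported $\mu$ exactly as you wrote (modulo the normalization constant), and then passes to general $\mu$ either by the inversion $z\mapsto 1/z$ and a splitting $\mu=\mu_0+\mu_\infty$, or, as Ahlfors--Bers actually do, by writing $f^{\mu(s)}=g_s\circ f^\mu$ and reducing to the case $\mu=0$ via change of variables. But as written your argument only covers the compactly supported case, and the extension step is where most of the analytic work in \cite{AB} lies.
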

As in Ahlfors-Bers, we denote the $W_{\textrm{loc}}^{1,p}$-limit by $$\theta^{\mu,a}=\lim_{s\to 0}\frac{f^{\mu+sa+s\alpha(s)}-f^\mu}{s}.$$ After their Lemma 21, the authors show the following. In the lemma below, $\mu,a$, and $\alpha(s)$ are as above.
\begin{lemma}[Part (iii) of Lemma 22 in \cite{AB}]\label{lemma: 22}
Assume we are also given uniformly bounded sequences $(\mu_n)_{n=1}^\infty\subset L_1^\infty(\C)$ and  $(a_n)_{n=1}^\infty\subset L^\infty(\C)$ such that $\lim_{n\to \infty} \mu_n=\mu$ and  $\lim_{n\to\infty} a_n = a$ in $L^\infty(\C)$. Then, $$\lim_{n\to\infty} \theta^{\mu_n,a_n} = \theta^{\mu,a}$$ in $W_{\textrm{loc}}^{1,p}(\C)$.
\end{lemma}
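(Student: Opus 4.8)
The plan is to show that $\theta^{\mu_n,a_n}\to\theta^{\mu,a}$ in $W_{\textrm{loc}}^{1,p}(\C)$ by writing both sides as solutions of explicit integral equations derived from the Beltrami equation, and then invoking the continuity of the relevant integral operators together with the $L^p$-contraction estimate that underlies Theorem \ref{theorem estimates}. Recall first the standard closed form for $\theta^{\mu,a}$: differentiating the equation $\partial_{\overline z}f^{\mu(s)}=\mu(s)\partial_z f^{\mu(s)}$ at $s=0$, and using that $f^\mu$ satisfies $\partial_{\overline z}f^\mu=\mu\partial_z f^\mu$, one finds that $g=\theta^{\mu,a}$ solves $\partial_{\overline z}g-\mu\partial_z g = a\,\partial_z f^\mu$. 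Writing $g = P\phi$ with $\phi=\partial_{\overline z}g\in L^p(\C)$ (so $\partial_z g = T\phi$), this becomes the integral equation $\phi - \mu T\phi = a\,\partial_z f^\mu$, i.e.\ $\phi = (I-\mu T)^{-1}(a\,\partial_z f^\mu)$, and $\theta^{\mu,a}=P\big((I-\mu T)^{-1}(a\,\partial_z f^\mu)\big)$. Since $\|\mu\|_\infty N_p\le \delta N_p<1$, the operator $I-\mu T$ is invertible on $L^p(\C)$ with inverse bounded by $(1-\delta N_p)^{-1}$, uniformly over all the $\mu$'s in play; this is exactly the rearrangement trick from the $k=0$ step of Lemma \ref{C_0inftycase}.

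With this formula in hand, I would split the difference $\theta^{\mu_n,a_n}-\theta^{\mu,a}$ into the three sources of variation and control each. Abbreviating $R_\nu=(I-\nu T)^{-1}$ and writing $\phi_n = R_{\mu_n}(a_n\,\partial_z f^{\mu_n})$, $\phi = R_\mu(a\,\partial_z f^\mu)$, the first step is to bound $\|\phi_n-\phi\|_p$. Using the resolvent identity $R_{\mu_n}-R_\mu = R_{\mu_n}(\mu_n-\mu)T R_\mu$ one gets $\|R_{\mu_n}-R_\mu\|_{L^p\to L^p}\le (1-\delta N_p)^{-2}N_p\|\mu_n-\mu\|_\infty\to 0$; combined with $\|a_n\,\partial_z f^{\mu_n}-a\,\partial_z f^\mu\|_p\le \|a_n-a\|_\infty\|\partial_z f^{\mu_n}\|_p + \|a\|_\infty\|\partial_z f^{\mu_n}-\partial_z f^\mu\|_p$, this reduces everything to two facts: $\|\partial_z f^{\mu_n}\|_p$ is uniformly bounded, and $\partial_z f^{\mu_n}\to\partial_z f^\mu$ in $L^p_{\textrm{loc}}$, equivalently (by the standard normalization identities for $f^\mu$, see \cite[\S 5.3]{Ast} or \cite{AB}) that $\partial_z f^\mu - 1 = T\big(R_\mu\mu\big)$ depends continuously on $\mu\in L^\infty_\delta(\C)$ in $L^p(\C)$ — which again follows from the resolvent estimate above. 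Once $\|\phi_n-\phi\|_p\to 0$ is established, applying the Cauchy transform $P$, which is continuous from $L^p(\C)$ into $W^{1,p}_{\textrm{loc}}(\C)$ by Proposition \ref{Cauchyprop} (indeed $\partial_{\overline z}P\phi=\phi$, $\partial_z P\phi = T\phi$, and $P\phi\to 0$ locally uniformly), gives $\theta^{\mu_n,a_n}\to\theta^{\mu,a}$ in $W^{1,p}_{\textrm{loc}}(\C)$.

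The one subtlety, and the step I expect to require the most care, is justifying that the closed-form expression $\theta^{\mu,a}=P(R_\mu(a\,\partial_z f^\mu))$ genuinely coincides with the limit defined in Lemma \ref{lemma: 21}, rather than merely solving the same linearized equation. The cleanest route is to note that Lemma \ref{lemma: 21} already does this identification implicitly: taking $\mu_n=\mu$, $a_n=a$ constant and varying only the auxiliary parameter, the difference quotient $s^{-1}(f^{\mu+sa+s\alpha(s)}-f^\mu)$ can be written via the analogous integral equation for each $s$, and passing to the limit $s\to 0$ in that equation — legitimate because $R_{\mu(s)}\to R_\mu$ and $\partial_z f^{\mu(s)}\to\partial_z f^\mu$ in $L^p$ by the same resolvent argument — yields precisely $\phi$; this both reproves the existence half of Lemma \ref{lemma: 21} and pins down the formula. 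Alternatively one can cite that this formula is exactly what \cite{AB} obtains in the course of proving Lemma 22. Either way, the remaining content is the uniform-in-$\mu$ resolvent bound and the continuity of $\mu\mapsto\partial_z f^\mu$ in $L^p(\C)$, both of which are immediate consequences of $\delta N_p<1$ and the contraction principle; the rest is bookkeeping with the triangle inequality.
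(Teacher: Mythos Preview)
The paper does not give its own proof of this lemma: it is quoted verbatim as ``Part (iii) of Lemma 22 in \cite{AB}'' and used as a black box, with the paper's own contribution being the bootstrap to higher Sobolev regularity in Lemma~\ref{lemma: higher Sobolev improvement}. So there is no in-paper argument to compare against; your sketch is effectively an attempt to reconstruct the Ahlfors--Bers proof.

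That said, there is a genuine gap in your reconstruction. You write $\theta^{\mu,a}=P\phi$ with $\phi=\partial_{\overline z}\theta^{\mu,a}\in L^p(\C)$ and then solve $\phi=(I-\mu T)^{-1}(a\,\partial_z f^\mu)$ on $L^p(\C)$; you also invoke the identity $\partial_z f^\mu-1=T(R_\mu\mu)$. Both of these require global $L^p(\C)$ membership that is simply not available under the stated hypotheses, where $\mu,a$ are merely in $L^\infty(\C)$ with no support or decay condition. Take $\mu=0$ and $a\equiv 1$: then $f^\mu(z)=z$, $f^{s}(z)=(z+s\overline z)/(1+s)$, and $\theta^{0,1}(z)=\overline z-z$, so $\partial_{\overline z}\theta^{0,1}\equiv 1\notin L^p(\C)$; likewise for constant $\mu=c$, $\partial_z f^\mu-1=-c/(1+c)$ is a nonzero constant, so $T(R_\mu\mu)$ cannot represent it. Hence neither $R_\mu(a\,\partial_z f^\mu)$ nor $R_\mu\mu$ makes sense as an element of $L^p(\C)$, and the resolvent estimate $\|R_{\mu_n}-R_\mu\|_{L^p\to L^p}\to 0$ has nothing to act on.

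The formulas you wrote are correct under the additional hypothesis that $\mu$ (hence $a$, $\mu_n$, $a_n$) has compact support, which is the setting of \cite[\S 5.3]{Ast} and of the preliminary lemmas in \cite{AB}; the passage from there to general $L^\infty$ data is a separate step that your outline does not supply. In \cite{AB} this is handled not by the naive $P\circ R_\mu$ representation but via an explicit kernel formula for $\theta^{\mu,a}$ obtained after the change of variables $w=f^\mu(z)$ (their Lemma~22), in which the integrand has enough decay to make sense globally and to pass to limits locally. If you want to salvage your approach, one route is to prove the statement first for compactly supported data using exactly your resolvent argument, and then show that $\theta^{\mu,a}$ depends continuously in $W^{1,p}_{\mathrm{loc}}$ on truncating $\mu,a$ to large balls; but that second step is itself nontrivial and is essentially what the explicit formula in \cite{AB} is designed to deliver.
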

With these two results, they prove that $f^\mu$ depends differentiably on $\mu$, in their chosen Sobolev spaces.
\begin{theorem}[Theorem 10 in \cite{AB}]\label{theorem: 10}
Let $t=(t_1,\dots, t_n)$ and $s=(s_1,\dots, s_n)$ be real or complex parameters, and suppose that we are given a family of Beltrami forms $\mu(t)$ such that for all $t$ in some open set, $$\mu(t+s) = \mu(t) + \sum_{i=1}^n a_i(t)s_i+|s|\alpha(t,s),$$ where $a_1,\dots, a_n$ are uniformly bounded functions in $L^\infty(\C)$ satisfying $a_i(t+s)-a_i(t)\to 0$ in $L^\infty(\C)$ as $s\to 0,$ and  
$\alpha(t,s)$ is a uniformly bounded family of functions in $L^\infty(\C)$ satisfying $\lim_{s\to 0} ||\alpha(t,s)||_{\infty}=0$ for all $t$. Then $f^{\mu(t+s)}$ admits the development 
\begin{equation}\label{eqn: fmu development}
f^{\mu(t+s)}=f^{\mu(t)}+\sum_{i=1}^n\theta^{\mu(t),a_i(t)}s_i + |s|\gamma(t,s),
\end{equation}
 where $\gamma(t,s)$ is a continuously varying family of functions in $W_{\textrm{loc}}^{1,p}(\C)$, uniformly bounded in $W_{\textrm{loc}}^{1,p}(\C)$, and such that, for every $t$, $\gamma(t,s)$ tends to $0$ in $W^{1,p}_{\textrm{loc}}(\C)$ as $s\to 0$.
\end{theorem}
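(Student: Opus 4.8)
The plan is to derive Theorem \ref{theorem: 10} from Lemmas \ref{lemma: 21} and \ref{lemma: 22} by following the Ahlfors--Bers argument almost verbatim; the statement is really a packaging of the two preceding lemmas into a uniform, differentiable development. The key point to establish is the qualitative behavior of the remainder $\gamma(t,s)$, defined \emph{by} equation (\ref{eqn: fmu development}), namely that it is uniformly bounded in $W_{\textrm{loc}}^{1,p}(\C)$, varies continuously in $(t,s)$, and tends to $0$ in $W_{\textrm{loc}}^{1,p}(\C)$ as $s\to 0$ for each fixed $t$.

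First I would fix $t$ and reduce to a one-parameter statement. Writing $s=|s|\,\omega$ with $\omega$ a unit vector in the parameter space (real or complex), set $a(t)=\sum_i a_i(t)\omega_i$, and observe that $\mu(t+s)=\mu(t)+|s|a(t)+|s|\alpha(t,s)$ is exactly of the form covered by Lemma \ref{lemma: 21} with the single parameter $|s|$ and perturbation $\alpha(s):=\alpha(t,s)$ (after noting $\|\alpha(t,s)\|_\infty\to 0$ as $s\to 0$, which holds by hypothesis). Lemma \ref{lemma: 21} then gives that $\big(f^{\mu(t+s)}-f^{\mu(t)}\big)/|s|\to \theta^{\mu(t),a(t)}=\sum_i\theta^{\mu(t),a_i(t)}s_i/|s|$ in $W_{\textrm{loc}}^{1,p}(\C)$, where I use linearity of $a\mapsto\theta^{\mu,a}$ (which follows from the limit characterization, since $\theta^{\mu,a_1+a_2}$ and $\theta^{\mu,a_1}+\theta^{\mu,a_2}$ both arise as the $W_{\textrm{loc}}^{1,p}$ derivative along $\mu+s(a_1+a_2)$, and scaling is immediate). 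This is precisely the statement that $\gamma(t,s)\to 0$ in $W_{\textrm{loc}}^{1,p}(\C)$ as $s\to 0$, for fixed $t$.

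Next I would handle the continuity and uniform boundedness of $\gamma(t,s)$ jointly in $(t,s)$, and this is where Lemma \ref{lemma: 22} enters and where the main subtlety lies. The remainder is $\gamma(t,s)=|s|^{-1}\big(f^{\mu(t+s)}-f^{\mu(t)}-\sum_i\theta^{\mu(t),a_i(t)}s_i\big)$; continuity of $(t,s)\mapsto f^{\mu(t+s)}$ in $W_{\textrm{loc}}^{1,p}(\C)$ (the $l=0$ case of Theorem A, which Ahlfors--Bers establish alongside) handles the first two terms away from $s=0$, while Lemma \ref{lemma: 22} — applied with $\mu_n=\mu(t_n)$, $a_n=a_i(t_n)$ for sequences $t_n\to t$ — gives continuity of $t\mapsto\theta^{\mu(t),a_i(t)}$, hence of the subtracted linear term. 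The genuinely delicate part is uniform control as $s\to 0$: one must show $\sup_{t,s}\|\gamma(t,s)\|_{W^{1,p}(\mathbb D_R)}<\infty$, i.e., that the difference quotients are bounded uniformly, not just convergent for each fixed starting point. The standard way to see this is to revisit the proof of Lemma \ref{lemma: 21}: there $\theta^{\mu,a}$ is obtained by solving a linear integral equation of the form $\dot\theta$-type $= (I-\mu T)^{-1}(\text{something involving }a)$, and the error terms are estimated by the operator norm $(1-\delta N_p)^{-1}$, which is uniform over all $\mu$ with $\|\mu\|_\infty\le\delta$ and over the uniformly bounded families $a,\alpha(s)$. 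Tracking those estimates — all of which are already uniform in the hypotheses we are handed — yields the uniform bound on $\gamma(t,s)$, and a compactness/equicontinuity argument over compact sets of $t$ upgrades pointwise convergence to the asserted joint continuity.

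I expect the main obstacle to be this last bookkeeping step: making sure that every estimate borrowed from the proofs of Lemmas \ref{lemma: 21} and \ref{lemma: 22} is genuinely uniform over the relevant families — uniform in $t$, in the direction $\omega$, and in $s$ near $0$ — rather than merely holding for each fixed datum. Once the uniformity is in hand, assembling (\ref{eqn: fmu development}) is a formality. Since this is exactly the argument carried out in \cite[proof of Theorem 10]{AB}, I would present it by recalling the structure above and citing \cite{AB} for the routine parts, emphasizing only the linearity of $a\mapsto\theta^{\mu,a}$ and the uniformity of the integral-equation estimates.
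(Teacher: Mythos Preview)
Your proposal is correct and matches the paper's own treatment: the paper likewise says the result is ``more or less immediate'' from Lemmas \ref{lemma: 21} and \ref{lemma: 22}, using Lemma \ref{lemma: 21} to obtain the derivative $\sum_i\theta^{\mu(t),a_i(t)}$ and Lemma \ref{lemma: 22} to control the remainder, and then refers to \cite[Theorem 10]{AB} for the details. Your unpacking of the one-parameter reduction, the linearity of $a\mapsto\theta^{\mu,a}$, and the uniformity bookkeeping is exactly the content behind that citation.
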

This result is more or less immediate from the two lemmas above. Indeed, Lemma \ref{lemma: 21} shows that one can differentiate $f^{\mu(t)}$ in $t$, which returns $\sum_{i=1}^n\theta^{\mu(t),a_i(t)}$, and Lemma \ref{lemma: 22} shows that the remainder term in the first order Taylor expansion in $W_{\textrm{loc}}^{1,p}(\C)$ has the right decay. See the proof of \cite[Theorem 10]{AB} for the quick proof.

With their Theorem 10, Ahlfors and Bers deduce a holomorphic dependence result (that is, \cite[Theorem 11]{AB}) with ease: assuming that $\mu(t)$ depends holomorphically on complex parameters $(t_1,\dots, t_n),$ then the formula (\ref{eqn: fmu development}) shows that, as an element of $W_{\textrm{loc}}^{1,p}(\C),$ $f^{\mu(t)}$ satisfies the Cauchy-Riemann equations in every $t_1,\dots, t_n$, and hence $t\mapsto f^{\mu(t)}$ is holomorphic in $W_{\textrm{loc}}^{1,p}(\C).$

Finally, we have to address in what sense the Ahlfors-Bers map is holomorphic. There are different notions of holomorphicity on locally convex topological vector spaces, and \cite[Theorem 11]{AB} shows that if $\Lambda$ is any finite dimensional complex manifold, and $\Lambda\mapsto L_\delta^\infty(\C)$, $t\mapsto\mu(t) $, is holomorphic, then $t\mapsto f^{\mu(t)}$ is holomorphic into $W_{\textrm{loc}}^{1,p}(\C).$ In particular, the global map $L_\delta^\infty(\C)\to W_{\textrm{loc}}^{1,p}(\Omega)$, as a map between complex Fr{\'e}chet spaces, is Gateaux holomorphic, which means that it is holomorphic in restriction to every complex line. While Gateaux holomorphicity does not always imply holomorphicity in the usual sense, i.e., Fr{\'e}chet holomorphicity (see \cite[Chapters 1 and 2]{Dodson}), it is a theorem attributed to Graves-Taylor-Hille-Zorn that a locally bounded and Gateaux holomorphic map between lots of classes of complex spaces is (Fr{\'e}chet) holomorphic (note that, even in finite dimensions, the analogous result for real manifolds is false). See \cite[Chapter 14]{chae1985holomorphy} for theory and exposition around these notions of holomorphicity, and the Graves-Taylor-Hille-Zorn theorem. Note that, in our source \cite{chae1985holomorphy}, the most general result is stated for open subsets of complex Banach spaces, but by taking projective limits it extends to Fr{\'e}chet spaces \cite{Dodson}, and since holomorphicity is local, the result extends further to Fr{\'e}chet manifolds. By \cite[Lemma 6]{AB}, the map in question is continuous, thus locally bounded, and hence the theorem is proved for $l=0.$

\subsection{Main proof}
We continue to work under the assumption that all Beltrami coefficients in $ L^\infty(\C)$ have norm at most $\delta<\eta$. Similar to above, we show holomorphicity for $L_\delta^\infty(\C)\cap W_{\textrm{loc}}^{l,\infty}(\Omega)\to  W_{\textrm{loc}}^{l+1,p}(\Omega),$ from which the holomorphicity for $L_\eta^\infty(\C)\cap W_{\textrm{loc}}^{l,\infty}(\Omega)\to  W_{\textrm{loc}}^{l+1,p}(\Omega)$ follows. 

To prove Theorem A, we need to establish the analogue of Theorem 10 from \cite{AB}. We package the needed ingredients into the lemma below. Given $\mu\in L_1^\infty(\C)$ and $a\in L^\infty(\C)$, we keep the same definition of $\theta^{\mu,a},$ which we already know defines an element of $W_{\textrm{loc}}^{1,p}(\C)$. Let $\Omega\subset \C$ be the open subset of Theorem A.
\begin{lemma}\label{lemma: higher Sobolev improvement}
    In the setting of Lemma 21 from \cite{AB}, now assume that $\mu,a,\alpha(s) \in W_{\textrm{loc}}^{l,\infty}(\Omega),$ that $s\mapsto \alpha(s)$ is continuous as a map to $W_{\textrm{loc}}^{l,\infty}(\Omega)$, and that on any relatively compact $U\subset \Omega$, $||\alpha(s)||_{W^{l,\infty}(U)}$ is uniformly bounded in $s$, and tends to $0$ as $s\to 0.$ Then, 
    \begin{enumerate}
        \item $\theta^{\mu,a}\in W_{\textrm{loc}}^{l+1,p}(\Omega)$,
        \item  and $$\theta^{\mu,a}=\lim_{s\to 0}\frac{f^{\mu+sa+s\alpha(s)}-f^\mu}{s}$$ in $W_{\textrm{loc}}^{l+1,p}(\Omega).$
    \end{enumerate}
    Moreover, given uniformly bounded sequences $(\mu_n)_{n=1}^\infty\subset L_1^\infty(\C)\cap W_{\textrm{loc}}^{l,\infty}(\Omega)$ and  $(a_n)_{n=1}^\infty\subset W_{\textrm{loc}}^{l,\infty}(\Omega)$ such that $\lim_{n\to \infty} \mu_n=\mu$ and $\lim_{n\to \infty}a_n =a$ in $W_{\textrm{loc}}^{l,\infty}(\Omega),$ we have that $$\lim_{n\to\infty} \theta^{\mu_n,a_n} = \theta^{\mu,a}$$ in $W_{\textrm{loc}}^{l+1,p}(\Omega)$.
\end{lemma}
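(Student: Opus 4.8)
The plan is to leverage Theorem \ref{theorem estimates} as a bootstrapping engine, feeding it the $l=0$ output of Lemmas \ref{lemma: 21} and \ref{lemma: 22} (equivalently, the content of \cite[Theorems 10, 11]{AB}). First I would show (1): from the $l=0$ theory, $\theta=\theta^{\mu,a}\in W^{1,p}_{\textrm{loc}}(\C)$, so it suffices to improve its regularity on every disk $\mathbb{D}_R\Subset\Omega$. The key is to write down the PDE that $\theta^{\mu,a}$ satisfies. Differentiating the Beltrami equation $\partial_{\bar z}f^{\mu(s)}=\mu(s)\,\partial_z f^{\mu(s)}$ with $\mu(s)=\mu+sa+s\alpha(s)$ in $s$ at $s=0$ — which is justified in $W^{1,p}_{\textrm{loc}}$ by Lemma \ref{lemma: 21} — gives, at least formally,
\[
\Big(\partial_{\bar z}-\mu\,\partial_z\Big)\theta^{\mu,a}=a\,\partial_z f^\mu.
\]
Now I invoke Theorem A in the case $l$ (already available by induction on $l$, or directly from the higher-regularity machinery) to get $f^\mu\in W^{l+1,p}_{\textrm{loc}}(\Omega)$, so $\partial_z f^\mu\in W^{l,p}_{\textrm{loc}}(\Omega)$, and since $\mu,a\in W^{l,\infty}_{\textrm{loc}}(\Omega)$ the right-hand side $a\,\partial_z f^\mu$ lies in $W^{l,p}_{\textrm{loc}}(\Omega)$ (Sobolev spaces of functions bounded together with enough derivatives are closed under multiplication on compact sets). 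Then Theorem \ref{theorem estimates}, applied with $u=\theta^{\mu,a}$, $v=a\,\partial_z f^\mu$, $k=l$, and any admissible $p$ with $\|\mu\|_\infty N_p<1$ (which holds since $\|\mu\|_\infty\le\delta<\eta=1/N_p$), upgrades $\theta^{\mu,a}$ to $W^{l+1,p}_{\textrm{loc}}(\Omega)$, proving (1).

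For (2), the difference quotient $q_s=(f^{\mu+sa+s\alpha(s)}-f^\mu)/s$ converges to $\theta^{\mu,a}$ in $W^{1,p}_{\textrm{loc}}(\C)$ by Lemma \ref{lemma: 21}; I must promote this to convergence in $W^{l+1,p}_{\textrm{loc}}(\Omega)$. The natural move is to apply the quantitative estimate in Theorem \ref{theorem estimates} to the function $w_s=q_s-\theta^{\mu,a}$. The point is that $w_s$ solves a Beltrami-type equation whose source involves (i) the $L^p$-small quantity $q_s-\theta^{\mu,a}$ itself, (ii) the difference between the Beltrami coefficients $\mu(s)$ and $\mu$, which by hypothesis is $O(|s|)$ in $W^{l,\infty}$ on compact sets, multiplied against derivatives of $f^{\mu(s)}$, and (iii) the difference $a-\alpha$-type terms; more precisely, subtracting the equations $(\partial_{\bar z}-\mu(s)\partial_z)f^{\mu(s)}=0$ and $(\partial_{\bar z}-\mu\partial_z)f^\mu=0$ and dividing by $s$ gives
\[
\Big(\partial_{\bar z}-\mu\,\partial_z\Big)q_s=\big(a+\alpha(s)\big)\,\partial_z f^{\mu+sa+s\alpha(s)},
\]
so that
\[
\Big(\partial_{\bar z}-\mu\,\partial_z\Big)w_s=\big(a+\alpha(s)\big)\,\partial_z f^{\mu(s)}-a\,\partial_z f^\mu.
\]
To control the right-hand side in $W^{l,p}(\mathbb{D}_R)$, I need $\partial_z f^{\mu(s)}\to\partial_z f^\mu$ in $W^{l,p}_{\textrm{loc}}(\Omega)$ as $s\to 0$; this is exactly what the "Moreover" clause asserts in a special case, so there is a slight circularity that I would resolve by proving the convergence statements simultaneously, or by first using the already-established $l$-version of Theorem A (holomorphic, hence continuous, dependence $\mu\mapsto f^\mu$ in $W^{l+1,p}_{\textrm{loc}}$) to get $\partial_z f^{\mu(s)}\to\partial_z f^\mu$ in $W^{l,p}_{\textrm{loc}}$ directly. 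Granting that, $\|a+\alpha(s)\|_{W^{l,\infty}(\mathbb{D}_R)}$ is uniformly bounded and $\alpha(s)\to0$, so the right-hand side tends to $0$ in $W^{l,p}(\mathbb{D}_R)$; together with $\|w_s\|_{W^{l,p}(\mathbb{D}_R)}\le\|q_s-\theta\|_{W^{l,p}(\mathbb{D}_R)}\to0$ (from the $l=0$, and inductively lower-$l$, statements), Theorem \ref{theorem estimates} gives $\|w_s\|_{W^{l+1,p}(\mathbb{D}_r)}\to0$, which is (2).

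The "Moreover" statement is proved the same way: set $w_n=\theta^{\mu_n,a_n}-\theta^{\mu,a}$, which by the $l=0$ Lemma \ref{lemma: 22} (and lower-$l$ induction) converges to $0$ in $W^{l,p}_{\textrm{loc}}(\Omega)$; it solves
\[
\Big(\partial_{\bar z}-\mu\,\partial_z\Big)w_n=a_n\,\partial_z f^{\mu_n}-a\,\partial_z f^\mu+(\mu_n-\mu)\,\partial_z\theta^{\mu_n,a_n},
\]
where the first two terms go to $0$ in $W^{l,p}_{\textrm{loc}}$ using $a_n\to a$ in $W^{l,\infty}_{\textrm{loc}}$, $f^{\mu_n}\to f^\mu$ in $W^{l+1,p}_{\textrm{loc}}$ (the $l$-version of Theorem A), and the last because $\mu_n\to\mu$ in $W^{l,\infty}_{\textrm{loc}}$ while $\partial_z\theta^{\mu_n,a_n}$ is uniformly bounded in $W^{l-1,p}_{\textrm{loc}}$ — the uniform bound coming from applying the quantitative estimate of Theorem \ref{theorem estimates} to $\theta^{\mu_n,a_n}$, whose defining equation has uniformly bounded data since the $\mu_n,a_n$ are uniformly bounded in $W^{l,\infty}$ and $\|\mu_n\|_\infty\le\delta$. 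Then Theorem \ref{theorem estimates} applied to $w_n$ yields $w_n\to0$ in $W^{l+1,p}_{\textrm{loc}}(\Omega)$.

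I expect the main obstacle to be bookkeeping the circular-looking dependencies cleanly: the proof of (1) and (2) at level $l$ wants to use Theorem A at level $l$ (to know $f^\mu\in W^{l+1,p}_{\textrm{loc}}$ and that $\mu\mapsto f^\mu$ is continuous there), which in turn is what this lemma is meant to establish at level $l+1$. The right way to organize this is an induction on $l$: the base case $l=0$ is \cite{AB}; assuming Theorem A holds up to level $l-1$, one has $f^\mu\in W^{l+1,p}_{\textrm{loc}}$ from Theorem \ref{theorem estimates} applied to $f^\mu$ itself (its Beltrami coefficient $\mu$ is $W^{l,\infty}_{\textrm{loc}}$), and continuity of $\mu\mapsto f^\mu$ into $W^{l+1,p}_{\textrm{loc}}$ from the level-$l$ output of the present lemma fed into an argument like \cite[Theorem 10]{AB}; then one proves the level-$l$ case of this lemma, closing the loop. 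A secondary, purely technical point is verifying that the $s$-derivative of the Beltrami equation is legitimate in the stated topology and that the product and composition estimates in $W^{l,p}_{\textrm{loc}}\cdot W^{l,\infty}_{\textrm{loc}}$ behave as claimed; these are standard but should be stated as a small multiplication lemma to keep the main argument readable.
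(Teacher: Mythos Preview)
Your proposal is correct and follows the same strategy as the paper: write down the Beltrami-type equations satisfied by $\theta^{\mu,a}$, by the difference quotient, and by $\theta^{\mu,a}-\theta^{\mu_n,a_n}$, then bootstrap via Theorem~\ref{theorem estimates} on top of the Ahlfors--Bers $l=0$ results. The PDEs you write down are exactly the ones the paper uses.

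The one place you make life harder than necessary is the perceived circularity in (2), which leads you to propose an induction on $l$. The paper sidesteps this entirely with a two-step argument. First, it applies Theorem~\ref{theorem estimates} to the difference quotient $\hat\chi(s)=q_s$ itself (whose right-hand side $(a+\alpha(s))\partial_z f^{\mu(s)}$ is \emph{uniformly bounded} in $W^{l,p}_{\mathrm{loc}}$, since $f^{\mu(s)}\in W^{l+1,p}_{\mathrm{loc}}$ with bounds depending only on $\|\mu(s)\|_{W^{l,\infty}}$ by Theorem~\ref{theorem estimates} applied to $f^{\mu(s)}$ with $v=0$); iterating from the Ahlfors--Bers $W^{1,p}$ bound gives $\hat\chi(s)$ uniformly bounded in $W^{l+1,p}_{\mathrm{loc}}$. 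Second, the identity $f^{\mu(s)}-f^\mu=s\,\hat\chi(s)$ then gives $f^{\mu(s)}\to f^\mu$ in $W^{l+1,p}_{\mathrm{loc}}$ for free, which is exactly the convergence you needed to make the right-hand side of the equation for $w_s$ go to zero in $W^{l,p}_{\mathrm{loc}}$. No induction, no appeal to Theorem~A. Similarly, $f^\mu\in W^{l+1,p}_{\mathrm{loc}}$ in part (1) is immediate from Theorem~\ref{theorem estimates} applied to $f^\mu$ with $v=0$; you do not need Theorem~A for that either. Your inductive organization would work, but the paper's direct route is cleaner and worth internalizing.
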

Theorem A follows from Lemma \ref{lemma: higher Sobolev improvement} by exactly the same argument as in \cite{AB}: if $\mu(t)$ depends on a complex parameter $t=(t_1,\dots, t_n),$ then for small $s=(s_1,\dots, s_n)$ we can now write 
\begin{equation}\label{eqn: better development}
    f^{\mu(t+s)}=f^{\mu(t)}+\sum_{i=1}^n\theta^{\mu(t),a_i(t)}s_i + |s|\gamma(t,s),
\end{equation}
with every term the same function as in Theorem \ref{theorem: 10}, but also lying in $W_{\textrm{loc}}^{l+1,p}(\Omega),$ and with $\gamma(t,s)$ tending to $0$ in $W_{\textrm{loc}}^{l+1,p}(\Omega)$ as $s\to 0$ for every $t$. That is, $f^{\mu(t)}$ depends differentiably in $t$ as an element of $W_{\textrm{loc}}^{l+1,p}(\Omega)$. And, as in \cite[Theorem 11]{AB}, if the  $\mu(t)$'s vary holomorphically in $W_{\textrm{loc}}^{l,\infty}(\Omega),$ then by the formula (\ref{eqn: better development}), $f^{\mu(t)}$ satisfies the Cauchy-Riemann equations in $W_{\textrm{loc}}^{l+1,p}(\Omega),$ so as in the $l=0$ case we have Gateaux holomorphicity. Combining \cite[Lemma 6]{AB} and Theorem \ref{theorem estimates} returns that our map from $L_\delta^\infty(\C)\cap W_{\textrm{loc}}^{l,\infty}(\Omega)\to W_{\textrm{loc}}^{l+1,p}(\Omega)$ is locally bounded (and in fact, it is not difficult to use these same results to see that it is continuous), and hence, analogous to the $l=0$ case, the Graves-Taylor-Hille-Zorn theorem yields the statement of Theorem A. So, it remains to prove Lemma \ref{lemma: higher Sobolev improvement}.
\begin{proof}[Proof of Lemma \ref{lemma: higher Sobolev improvement}]
We bootstrap on the estimates implied by Lemmas \ref{lemma: 21} and \ref{lemma: 22}, using Theorem \ref{theorem estimates}. To show our bounds and convergence in $W_{\textrm{loc}}^{1,p}(\Omega),$ it suffices to prove that for any fixed disk $U\subset \Omega$, we have bounds and convergence in $W^{l+1,p}(U)$. Recall that we declared $p$ to be close enough to $2$ so that $\delta N_p<1$, and hence in particular we can always apply Theorem \ref{theorem estimates}. As well, choosing $s$ small, we can always assume that $f^{\mu+sa+s\alpha(s)}$ is uniformly bounded in $W_{\textrm{loc}}^{l+1,p}(\Omega)$, independently of $s.$

Note that (as is observed in \cite{AB}), 
\begin{equation}\label{eqn: derivative of first variation}
    (\partial_{\overline{z}}-\mu\partial_z) \theta^{\mu,a}= a \partial_z f^\mu.
\end{equation}
Thus, since $a\in W_{\textrm{loc}}^{l,\infty}(\Omega)$ and $f^{\mu}\in W_{\textrm{loc}}^{l+1,p}(\Omega)$, $$(\partial_{\overline{z}}-\mu\partial_z) \theta^{\mu,a}\in W_{\textrm{loc}}^{l,p}(\Omega).$$
The first assertion of Lemma \ref{lemma: higher Sobolev improvement} then follows routinely: by Theorem \ref{theorem estimates}, for any disk $V\subset \Omega$ containing $U$, $$||\theta^{\mu,a}||_{W^{2,p}(U)}\leq C(||\mu||_{W^{1,\infty}(V)},U,V,1,p)(||\theta^{\mu,a}||_{W^{1,p}(V)}+ ||(\partial_{\overline{z}}-\mu\partial_z) \theta^{\mu,a}||_{W^{1,p}(V)}),$$ so $\theta^{\mu,a}\in W_{\textrm{loc}}^{2,p}(\Omega)$. Bootstrapping that estimate as much as we can, we obtain $\theta^{\mu,a}\in W_{\textrm{loc}}^{l+1,p}(\Omega)$. Note that, as we do the bootstrap, the dependency in the final estimate of $||\theta^{\mu,a}||_{W^{l+1,p}(U)}$ is on $||\mu||_{W^{l,\infty}(V)}, U, V, l$, and $p$.

For the second assertion, setting $$\hat{\chi}(s) = \frac{f^{\mu+sa+s\alpha(s)}-f^\mu}{s},$$ observe that  
$$(\partial_{\overline{z}}-\mu\partial_z)\hat{\chi}(s)=(a+\alpha(s))\partial_z f^{\mu+sa+s\alpha(s)},$$ which is uniformly bounded in $W_{\textrm{loc}}^{l,p}(\Omega)$, since for small $s$,
$$||(a+\alpha(s))\partial_z f^{\mu+sa+s\alpha(s)}||_{W^{l,p}(U)}\leq (||a||_{W^{l,\infty}(U)}+1)||f||_{W^{l+1,p}(U)}.$$ Therefore, by Theorem \ref{theorem estimates}, since we know by Lemma \ref{lemma: 21} that $\hat{\chi}(s)$ is uniformly bounded in $W_{\textrm{loc}}^{1,p}(\Omega)$, we get that it is uniformly bounded in $W_{\textrm{loc}}^{2,p}(\Omega)$. Iterating the bootstrap as above, we obtain that $\hat{\chi}(s)$ is uniformly bounded in $W_{\textrm{loc}}^{l+1,p}(\Omega)$. Moreover, the function
$$\chi(s) = \frac{f^{\mu+sa+s\alpha(s)}-f^\mu}{s}-\theta^{\mu,a},$$ which we know tends to $0$ in $W_{\textrm{loc}}^{1,p}(\Omega)$ as $s\to 0,$ is uniformly bounded in $W_{\textrm{loc}}^{l+1,p}(\Omega)$. We further compute that
$$(\partial_{\overline{z}}-\mu\partial_z)\chi(s) =
(a+\alpha(s))\partial_z f^{\mu+sa+s\alpha(s)} - a\partial_z f^{\mu},$$ from which we deduce that $(\partial_{\overline{z}}-\mu\partial_z)\chi(s)$ tends to $0$ in $W_{\textrm{loc}}^{l,p}(\Omega)$ as $s\to 0,$ since 
\begin{align*}
    ||(\partial_{\overline{z}}-\mu\partial_z)\chi||_{W^{l,p}(U)} &\leq ||\alpha(s)||_{W^{l,\infty}(U)}||f^{\mu+sa+s\alpha(s)}||_{W^{l+1,p}(U)}+||a||_{W^{l,\infty}(U)}(||f^{\mu+sa+s\alpha(s)}-f^{\mu}||_{W^{l+1,p}(U)}) \\
    &= ||\alpha(s)||_{W^{l,\infty}(U)}||f^{\mu+sa+s\alpha(s)}||_{W^{l+1,p}(U)}+||a||_{W^{l,\infty}(U)}|s|\cdot ||\hat{\chi}||_{W^{l+1,p}(U)},
\end{align*}
and both terms above tend to $0$ as $s\to 0.$ Bootstrapping via Theorem \ref{theorem estimates} again, we deduce that $\chi(s)$ tends to $0$ in $W_{\textrm{loc}}^{1+1,p}(\Omega)$ as $s\to 0,$ which proves the second assertion.

Finally, for the ``moreover" statement, we compute, using (\ref{eqn: derivative of first variation}), that
\begin{align*}
    (\partial_{\overline{z}}-\mu\partial_z)(\theta^{\mu,a}-\theta^{\mu_n,a_n}) &=  (\partial_{\overline{z}}-\mu\partial_z)\theta^{\mu,a}- (\partial_{\overline{z}}-\mu_n\partial_z)\theta^{\mu_n,a_n}-(\mu_n-\mu)\partial_z \theta^{\mu_n,a_n} \\
    &=a\partial_z f^\mu - a_n\partial_z f^{\mu_n} - (\mu_n-\mu)\partial_z \theta^{\mu_n,a_n},
\end{align*}
and hence for $n$ large, $$ ||(\partial_{\overline{z}}-\mu\partial_z)(\theta^{\mu,a}-\theta^{\mu_n,a_n})||_{W^{l,p}(U)}\leq (||a||_{W^{l,\infty}(U)}+1)||f^\mu - f^{\mu_n}||_{W^{l,p}(U)}+ ||\mu_n - \mu||_{W^{l,\infty}(U)}||\theta^{\mu_n,a_n}||_{W^{l+1,p}(U)}.$$ Since the bounds on $||\theta^{\mu_n,a_n}||_{W^{l+1,p}(U)}$ depend only on the $W_{\textrm{loc}}^{l,\infty}(\Omega)$ semi-norms of $\mu_n$ and $a_n,$ they are uniformly controlled. It follows that $$\lim_{n\to \infty} ||(\partial_{\overline{z}}-\mu\partial_z)(\theta^{\mu,a}-\theta^{\mu_n,a_n})||_{W^{l,p}(U)}=0.$$ By yet another bootstrap using Theorem \ref{theorem estimates}, we conclude that $\theta^{\mu_n,a_n}$ tends to $\theta^{\mu,a}$ in $W_{\textrm{loc}}^{l+1,p}(\Omega)$, as desired. 
\end{proof}
As discussed above, verifying Lemma \ref{lemma: higher Sobolev improvement} completes the proof of Theorem A.
\begin{remark}\label{rem: Cinftycase}
    For the $C^\infty$ case, we take $p=2$, and then for every $l$, the map $\mu\mapsto f^\mu$ defines a holomorphic map $L_1^\infty(\C) \cap W_{\textrm{loc}}^{l,\infty}(\Omega)\to W_{\textrm{loc}}^{l+1,2}(\Omega)$. Since the Fr{\'e}chet structure on $C^\infty(\Omega)$ can be generated by the semi-norms coming from $W_{\textrm{loc}}^{l,\infty}(\Omega)$ on the source, as well as $W_{\textrm{loc}}^{l+1,2}(\Omega)$ on the target, it follows that $\mu\mapsto f^\mu$ restricts to a holomorphic map from $L_1^\infty(\mathbb{C})\cap C^\infty(\Omega)\to C^\infty(\Omega).$
\end{remark}

\section{Proof of Theorem B}
\label{sec: Bers theorem}

\subsection{Quasiconformal complex structures on a surface}
\label{subs: complex structures}
Let $S$ be an oriented surface and let $c_0$ be a complex structure inducing the same orientation and such that the holomorphic universal cover is biholomorphic to the upper half-plane $\mathbb H\subset \C$. As in the introduction, we fix such a biholomorphism $\psi_0: (\widetilde S, c_0)\to \mathbb H$, which by construction is equivariant.

A measurable section $\mu$ of the bundle $T^{1,0}S\otimes (T^{0,1}S)^*$ can be written locally in the form $\mu(z) \frac {d\overline z}{dz}.$ While the function $\mu(z)$ depends on the choice of local coordinate, its absolute value does not, and hence $|\mu(z)|$ determines a well-defined measurable function on $S$, which we denote by $|\mu|$. A \textbf{Beltrami form} on $(S,c_0)$ is a measurable section $\mu$ of $T^{1,0}S\otimes (T^{0,1}S)^*$ such that $|\mu|$ lies in the open unit ball $L^\infty_1(S)$ of $L^\infty(S)$. Pulling back forms through the universal cover and $\psi_0$ (together with the choice of the canonical coordinate $z$ on $\mathbb H$) allow one to see the space of Beltrami forms on $(S,c_0)$ as a {closed} complex subspace $\textrm{Belt}_{c_0}(S)$ of $L_1^\infty(\mathbb H)$ (namely, those satisfying the correct transformation law with respect to the $\pi_1(S)$-action). If we instead work on $(\overline{S},\overline{c_0}),$ the space of Beltrami forms $\textrm{Belt}_{\overline{c_0}}(\overline{S})$ is seen as a complex subspace of $L_1^\infty(\overline{\mathbb H})$.

Let $f\colon(S,c)\to (S,c')$ be an orientation preserving homeomorphism that lies in $W_{\textrm{loc}}^{1,2}$. Working in local coordinates for $c$ and $c'$, the expressions $\frac{\partial_{\overline z} f d\overline z}{\partial_z f dz}$, where derivatives are taken in the distributional sense, determine a measurable section $\mu$ of $T^{1,0}S\otimes (T^{0,1}S)^*$. If $\mu$ is a Beltrami form, then the map $f$ is called \textbf{quasiconformal}, and $\mu$ is moreover called the \textbf{Beltrami differential of $f$}.

Now, as in the introduction, let $\mathcal{C}(S,c_0)$ be the space of complex structures $c$ on $S$ such that the identity map $(S,c_0)\to (S,c)$ is quasiconformal. The space $\mathcal{C}(S,c_0)$ is in one-to-one correspondence with the space of Beltrami forms on $(S,c_0)$. Indeed, to every $c\in \mathcal{C}(S,c_0)$, one can associate the Beltrami differential of the identity map $id\colon (S,c_0)\to (S,c)$. To construct the inverse, let $\mu\in \mathrm{Belt}_{c_0}(S)$ and extend it to the lower half-plane $\overline {\mathbb H}$ by reflection $\overline z \mapsto \overline {\mu (z)}$. The Measurable Riemann Mapping Theorem provides a quasiconformal homeomorphism $f_{\mu}\colon \mathbb H\to \mathbb H$ with Beltrami coefficient $\mu$. Then, by construction, the pullback complex structure through $f_\mu\circ \psi_0$ is preserved by the deck group, so it descends to a complex structure on $S$. Observe that this argument also shows that the holomorphic universal cover of $(S,c)$ is conformal to $\mathbb H$. 

If $S$ is closed and of genus at least 2, $\mathcal C(S,c_0)$ does not depend on the choice of $c_0$: it is the whole space of complex structures on $S$ inducing the same orientation as $c_0$. 

For $1\le l \leq \infty$, as in the introduction, we define $\mathcal C^l(S,c_0)$ as the space of complex structures such that, for the base point $(S,c_0)$, correspond to $W_{\textrm{loc}}^{l, \infty}$ elements in $\mathrm{Belt}_{c_0}(S)$, with the understanding that for $l=\infty$, $W_{\textrm{loc}}^{l, \infty}=C^\infty$ (note that, when $S$ is closed, we can drop the ``loc"). For every $l$, $\mathcal{C}^{l}(S,c_0)$ is a complex Fr{\'e}chet manifold, while for $S$ closed and $l<\infty$, $\mathcal{C}^l(S,c_0)$ is a complex Banach manifold. 

Since we are concerned with holomorphic maps involving these spaces, we take a moment to describe the Fréchet manifold structures. We assume that $S$ is not compact for now. Similar to the case $l=0$, for every $l$, we view $\mathcal C^l(S,c_0)$ as the intersection of $L_1^\infty(\mathbb{H})$ with a closed complex subspace of $W_{\textrm{loc}}^{l,\infty}(\mathbb{H})$. Once we have a Fréchet space structure on $W_{\textrm{loc}}^{l,\infty}(\mathbb{H})\cap L^\infty(\mathbb{H})$ such that the $L^\infty$ norm is continuous, $\mathcal C^l(S,c_0)$ becomes an open subset of a Fréchet space, hence a Fréchet manifold. For $l<\infty,$ the semi-norms that generate the Fréchet structure on $W_{\textrm{loc}}^{l,\infty}(\mathbb{H})$ are given by taking the $W^{l,\infty}$ norms over a covering of $\mathbb{H}$ by a countable family of relatively compact open sets. For $l=\infty$, one usually takes the semi-norms to be the $C^k$ norms, $k\in \mathbb{Z}_{\geq 0}$, over a relatively compact covering. We point out that, by the Sobolev embedding theorem, if we choose our open subsets to have Lipschitz boundary (this condition ensures that the Sobolev embedding theorem can be applied) one can replace the $C^k$ norms, $k\in \mathbb{Z}_{\geq 0}$, with the $W^{l,\infty}$ norms, $l\in \mathbb{Z}_{\geq 0}$, and the Fréchet space structure will be isomorphic. When $S$ is closed, for $l<\infty$, one can define a Banach space structure by taking the $W^{l,\infty}$ norm over a fundamental domain, and, when $l=\infty,$ one takes the $W^{l,\infty}$ norms as the generating semi-norms. 
\subsection{Bers' theorem}\label{subsection: Bers theorem}

We briefly explain how to prove Theorem \ref{theorem: Bers theorem} from the Measurable Riemann Mapping Theorem. The procedure detailed below is not quite the same as in Bers' seminal paper \cite{SU}, but it is equivalent and has been carried out as below in other sources, for instance in \cite[\S 6.12]{Hub}. Given $(c_1,\overline{c_2})\in\mathcal{C}(S, c_0)\times \mathcal{C}(\overline{S}, \overline{c_0})$, let $\mu_1$ and $\mu_2$ be the elements of $\textrm{Belt}_{c_0}(S)\subset L_1^\infty(\mathbb H)$ and $\textrm{Belt}_{\overline{c_0}}(\overline{S})\subset L_1^\infty(\overline{\mathbb H})$ representing $c_1$ and $c_2$ respectively. Then, applying the Measurable Riemann Mapping Theorem to
\[
\widehat \mu(z)= \begin{cases}
    \mu_1(z) &\text{ if $z\in \mathbb H$}\\
    \mu_2(z) &\text{ if $z\in \overline{\mathbb H}$ }, 
\end{cases}
\]
we obtain the quasiconformal map $f^{\widehat{\mu}},$ which we view as a homeomorphism of $\mathbb{CP}^1$. Note that, although $\widehat{\mu}$ isn't defined on the measure zero set $\mathbb{R}$, the canonical solution depends only on the element of $L^\infty(\C)$ determined by $\widehat{\mu}$. Under the identifications $\psi_0: (\widetilde S,c_0)\to \mathbb H$ and $\overline{\psi_0}:(\widetilde S,\overline{c_0})\to \overline{\mathbb H}$, the maps $f_1=\vb*{{f_+}}\ccpair$ and $\overline{f_2}=\vb*{\overline{f_-}}\ccpair$ from the statement of Theorem \ref{theorem: Bers theorem} are defined, for $z\in \mathbb{H}$, by $f_1(z) = f^{\widehat{\mu}}(z)$ and $\overline{f_2}(\overline{z}) = f^{\widehat{\mu}}(\overline{z})$. The equivariance follows from the uniqueness of the solution to the Beltrami equation.

\subsection{Composition maps}
We are just about ready to prove Theorem B, but first we need a general result on composition maps between Sobolev spaces. Composition maps between various function spaces have been thoroughly investigated; see, for instance, \cite[Chapter 5]{Sobolevbook}. We could not find statements in the literature quite close enough to the results below.

Let $k\geq 1$ be an integer and let $1<p<\infty$. Given open subsets $\Omega\subset \C^m$ and $U\subset \C^n$, we view $W_{\textrm{loc}}^{k,p}(\Omega,U)$, the space of $W_{\textrm{loc}}^{k,p}(\Omega)$ functions that map into $U$, as an open subset of $W_{\textrm{loc}}^{k,p}(\Omega)$, and hence it is a Fréchet manifold. When $\frac{m}{p}<1,$ the Sobolev embedding theorem gives, for any $s\geq 1$, a continuous inclusion $W_{\textrm{loc}}^{s,p}\to W_{\textrm{loc}}^{s-1,\infty},$ and when $k-\frac{m}{p}>0$ (which holds when $\frac{m}{p}<1$), the space $W_{\textrm{loc}}^{k,p}(\Omega)$ is a Fréchet algebra. In this subsection, we resume all of our notation related to Sobolev spaces and multi-indices from Section 2. 

\begin{prop}
\label{prop: loc composition is holo}
    Let $\Omega\subset \C^m$ and $U\subset \C^n$ be open subsets and let $g: U\to \C$ be a $C^\infty$ function. Assuming $\frac{m}{p}<1$, the map $$\Phi_g: W_{\textrm{loc}}^{k,p}(\Omega,U)\to W_{\textrm{loc}}^{k,p}(\Omega), \hspace{1mm} u\mapsto g\circ u,$$ is well-defined, i.e., $g\circ u\in W_{\textrm{loc}}^{k,p}(\Omega)$, and Fréchet differentiable. Furthermore, the map is Fréchet holomorphic if and only if $g$ is holomorphic in $U$. 
\end{prop}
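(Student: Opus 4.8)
\emph{Setup.} The plan is to reduce everything to Banach-space statements over relatively compact open subsets $V\subset\Omega$ with Lipschitz boundary, since the Fréchet topology on $W_{\textrm{loc}}^{k,p}(\Omega)$ is generated by the seminorms $u\mapsto\|u\|_{W^{k,p}(V)}$ for such $V$. The two standing hypotheses are exactly the ones recorded before the statement: because $\frac{m}{p}<1$ (i.e. $p>m$), the Sobolev embedding theorem gives $W^{k,p}(V)\hookrightarrow C^0(\overline V)$ — so elements of $W_{\textrm{loc}}^{k,p}(\Omega,U)$ are continuous and, after a small perturbation, still have image in a fixed compact subset of $U$ — and $W^{k,p}(V)$ is a Banach algebra, with the intermediate embeddings $W^{j,p}(V)\hookrightarrow W^{j-1,\infty}(V)$ available for $1\le j\le k$. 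I regard $g$ as a $C^\infty$ function of the $2n$ real coordinates on $U\subset\C^n$, and take as the candidate derivative of $\Phi_g$ at $u$ the pointwise map $L_u[v]=(Dg)(u)\cdot v$; in Wirtinger form, $L_u[v]=\sum_j\bigl[(\partial_{w_j}g)(u)\,v_j+(\partial_{\overline{w_j}}g)(u)\,\overline{v_j}\bigr]$.

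\emph{Well-definedness.} Fix $V$ as above and $u\in W_{\textrm{loc}}^{k,p}(\Omega,U)$, and set $K=u(\overline V)$, a compact subset of $U$. For a multi-index $\alpha$ with $|\alpha|\le k$, the Faà di Bruno formula writes $\partial^\alpha(g\circ u)$ as a finite sum of terms $(\partial^\beta g)(u)\,\prod_{i=1}^r\partial^{\gamma_i}u_{a_i}$, with $\gamma_1+\cdots+\gamma_r=\alpha$, each $\gamma_i\neq 0$, and $|\beta|=r$. The factor $(\partial^\beta g)(u)$ is bounded on $\overline V$ by $\sup_K|\partial^\beta g|$; and since the $|\gamma_i|\ge 1$ sum to $|\alpha|\le k$, at most one of them can equal $k$ — if one does, then $r=1$ and the one surviving factor lies in $L^p(V)$; otherwise every $\partial^{\gamma_i}u_{a_i}\in W^{k-|\gamma_i|,p}(V)\hookrightarrow W^{k-|\gamma_i|-1,\infty}(V)\subset L^\infty(V)$, so the product lies in $L^\infty(V)\subset L^p(V)$. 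Collecting the estimates I get $\|g\circ u\|_{W^{k,p}(V)}\le C(g,K,V,k,p)\bigl(1+\|u\|_{W^{k,p}(V)}^k\bigr)$, a bound polynomial in $\|u\|_{W^{k,p}(V)}$. To confirm that the Faà di Bruno expression really is the weak derivative, I would mollify $u$ on a slightly smaller domain: the mollifications $u_\varepsilon$ are smooth, converge to $u$ in $W^{k,p}$ and (via $W^{k,p}\hookrightarrow C^0$) uniformly, hence for small $\varepsilon$ have image in a fixed compact subset of $U$, and each Faà di Bruno term in $u_\varepsilon$ converges in $L^p$ to the one in $u$ — using $L^\infty$-convergence of the low-order factors and uniform convergence of $(\partial^\beta g)(u_\varepsilon)$ — so passing to the limit in the distributional identity gives $g\circ u\in W_{\textrm{loc}}^{k,p}(\Omega)$.

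\emph{Differentiability and continuity.} Applying the previous step to the $C^\infty$ functions $\partial_{w_j}g$ and $\partial_{\overline{w_j}}g$, the coefficients of $L_u$ lie in $W_{\textrm{loc}}^{k,p}(\Omega)$; since multiplication is continuous on the Fréchet algebra $W_{\textrm{loc}}^{k,p}(\Omega)$ and $v\mapsto\overline v$ is continuous, $L_u$ is a continuous $\R$-linear endomorphism of $W_{\textrm{loc}}^{k,p}(\Omega)$. By Taylor's formula with integral remainder,
\[
\Phi_g(u+v)-\Phi_g(u)-L_u[v]=\sum_{a,b}\int_0^1(1-t)\,(\partial_a\partial_b g)(u+tv)\,v_a v_b\,dt .
\]
On the neighbourhood of $0$ where $\|v\|_{W^{k,p}(V)}$ is small enough that $(u+tv)(\overline V)$ stays inside a fixed compact subset of $U$ and $\|u+tv\|_{W^{k,p}(V)}$ stays bounded for all $t\in[0,1]$, the well-definedness estimate bounds $\|(\partial_a\partial_b g)(u+tv)\|_{W^{k,p}(V)}$ by a constant, and two applications of the Banach algebra inequality give $\|(\partial_a\partial_b g)(u+tv)\,v_a v_b\|_{W^{k,p}(V)}\le C\|v\|_{W^{k,p}(V)}^2$ uniformly in $t$; integrating and summing yields $\|\Phi_g(u+v)-\Phi_g(u)-L_u[v]\|_{W^{k,p}(V)}\le C\|v\|_{W^{k,p}(V)}^2$. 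Since the seminorms $\|\cdot\|_{W^{k,p}(V)}$ generate the topology of $W_{\textrm{loc}}^{k,p}(\Omega)$, this quadratic control shows $\Phi_g$ is Fréchet differentiable at $u$ with derivative $L_u$, and in particular continuous.

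\emph{Holomorphy, and the main obstacle.} If $g$ is holomorphic, then $\partial_{\overline{w_j}}g\equiv 0$, so $L_u[v]=\sum_j(\partial_{w_j}g)(u)\,v_j$ is $\C$-linear at every $u$ — multiplication by a fixed element of the complex Fréchet algebra $W_{\textrm{loc}}^{k,p}(\Omega)$ is $\C$-linear — and a Fréchet differentiable map between open subsets of complex Fréchet spaces with $\C$-linear derivative at every point is Gateaux holomorphic, hence (being continuous, thus locally bounded) holomorphic by the Graves-Taylor-Hille-Zorn theorem recalled in Section 2. Conversely, if $\Phi_g$ is Fréchet holomorphic, I compose it with the $\C$-linear continuous — hence holomorphic — map $\iota\colon U\to W_{\textrm{loc}}^{k,p}(\Omega,U)$ that sends $c$ to the constant function $c$, and with evaluation $\mathrm{ev}_{x_0}$ at a fixed $x_0\in\Omega$, which is a continuous $\C$-linear functional on $W_{\textrm{loc}}^{k,p}(\Omega)$ because $W_{\textrm{loc}}^{k,p}(\Omega)\hookrightarrow C^0_{\textrm{loc}}(\Omega)$; then $g=\mathrm{ev}_{x_0}\circ\Phi_g\circ\iota$ is a composition of holomorphic maps, so $g$ is holomorphic on $U$. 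The main obstacle will be the two Sobolev-space steps: organizing the Faà di Bruno sum so that each term lands in $W^{k,p}(V)$ with a norm controlled polynomially in $\|u\|_{W^{k,p}(V)}$ — precisely where the Banach algebra property and the embeddings $W^{j,p}\hookrightarrow W^{j-1,\infty}$ are used — and justifying the chain rule, i.e. identifying that sum as the weak derivative, via mollification while keeping images inside a fixed compact subset of $U$. Once $L_u$ and the quadratic remainder bound are in hand, the equivalence with holomorphy of $g$ is soft.
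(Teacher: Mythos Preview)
Your proof is correct and follows essentially the same route as the paper: reduce to relatively compact $V\subset\Omega$ (Banach setting), use Fa\`a di Bruno plus the Sobolev embeddings $W^{j,p}\hookrightarrow W^{j-1,\infty}$ for well-definedness with polynomial bounds, apply Taylor's formula with integral remainder and the Banach algebra property of $W^{k,p}$ to get the quadratic remainder estimate and hence Fr\'echet differentiability with derivative $L_u[v]=\sum_j(\partial_{w_j}g)(u)v_j+(\partial_{\overline{w_j}}g)(u)\overline{v_j}$, and then read off holomorphy from $\C$-linearity of $L_u$. The only cosmetic differences are that the paper packages the boundedness step as a separate lemma (applied to $g_0$ ranging over $g$, $\partial_{w_j}g$, $\partial_{\overline{w_j}}g$, and the second-order derivatives in the remainder), and that your invocation of Graves--Taylor--Hille--Zorn is unnecessary here---once you have Fr\'echet differentiability with $\C$-linear derivative, that \emph{is} Fr\'echet holomorphy, so the detour through Gateaux holomorphy plus local boundedness is redundant (though harmless).
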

In the proof of Theorem B, we will use Proposition \ref{prop: loc composition is holo} to show that pulling back the holomorphic Riemannian metric on $\mathbb{CP}^1\times\mathbb{CP}^1\setminus \Delta$ is a holomorphic operation between appropriate Fréchet manifolds.

Since the Fréchet structure on $W_{\textrm{loc}}^{k,p}(\Omega)$ is generated by $W^{k,p}$ norms with respect to open subsets taken over a relatively compact covering of $\Omega$, we first treat the relatively compact case (Lemma \ref{lemma: composition holomorphic} below). Before getting to that, we need a general boundedness result.
\begin{lemma}\label{lemma: composition bounded}
    Let $\Omega\subset \C^m$ and $U\subset \C^n$ be relatively compact open subsets with Lipschitz boundary, and let $g_0: U\to \C$ be a $C^\infty$ function that extends smoothly to a neighbourhood of the closure of ${U}$.  Assuming $\frac{m}{p}<1$, the corresponding map $$\Phi_{g_0}: W^{k,p}(\Omega,U)\to W^{k,p}(\Omega), \hspace{1mm} u\mapsto g_0\circ u,$$ is well-defined and satisfies a bound 
    \begin{equation}\label{eq: bound for C_g}
        ||\Phi_{g_0}(u)||_{W^{k,p}(\Omega)}\leq C(k,p,\Omega,U)(||g_0||_{L^\infty(U)}+P(||g_0||_{W^{k,\infty}(U)},||u||_{W^{k,p}(\Omega)})),
    \end{equation}
for $P$ some polynomial of $||g_0||_{W^{k,\infty}(U)}$ and $||u||_{W^{k,p}(\Omega)}$ that vanishes when $u=0.$ 
\end{lemma}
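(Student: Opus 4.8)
The plan is to estimate the $W^{k,p}(\Omega)$ norm of $g_0\circ u$ by expanding its derivatives via the Fa\`a di Bruno formula (the multivariate chain rule) and then controlling each resulting term using H\"older's inequality together with the Sobolev embedding $W^{k,p}(\Omega)\hookrightarrow W^{k-1,\infty}(\Omega)$, which is available since $\frac{m}{p}<1$ and $\Omega$ has Lipschitz boundary. First I would recall that, for a multi-index $\alpha$ with $1\le|\alpha|\le k$, $\partial^\alpha(g_0\circ u)$ is a finite sum of terms of the form $\big((\partial^\beta g_0)\circ u\big)\cdot \prod_{j} \partial^{\gamma_j}u^{(i_j)}$, where $1\le|\beta|\le|\alpha|$, each $|\gamma_j|\ge 1$, and $\sum_j|\gamma_j|=|\alpha|$; the combinatorial coefficients depend only on $k$. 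The factor $(\partial^\beta g_0)\circ u$ is bounded in $L^\infty(\Omega)$ by $\|g_0\|_{W^{k,\infty}(U)}$ since $u$ maps into $U$, and for $|\alpha|=0$ we simply bound $g_0\circ u$ pointwise by $\|g_0\|_{L^\infty(U)}$, which accounts for the first summand on the right-hand side of \eqref{eq: bound for C_g}.

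Next I would handle the product $\prod_j \partial^{\gamma_j}u^{(i_j)}$. Since each $|\gamma_j|\ge 1$ and $\sum_j|\gamma_j|=|\alpha|\le k$, at most one factor can have $|\gamma_j|=|\alpha|$ (the "top order" case, with only one factor in the product), and in that case the single factor $\partial^{\alpha}u^{(i)}$ is measured in $L^p(\Omega)$ directly, contributing $\|u\|_{W^{k,p}(\Omega)}$. In every other case each factor has $|\gamma_j|\le k-1$, so by the Sobolev embedding $\partial^{\gamma_j}u^{(i_j)}\in L^\infty(\Omega)$ with $\|\partial^{\gamma_j}u^{(i_j)}\|_{L^\infty(\Omega)}\le C(k,p,\Omega)\|u\|_{W^{k,p}(\Omega)}$; taking the product over the (at most $k$) factors gives an $L^\infty(\Omega)$ bound by $C\|u\|_{W^{k,p}(\Omega)}^{\#\{j\}}$. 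Combining: each term in $\partial^\alpha(g_0\circ u)$ is bounded in $L^p(\Omega)$ (using $|\Omega|<\infty$ to pass from $L^\infty$ to $L^p$ on the lower-order factors) by $C(k,p,\Omega)\,\|g_0\|_{W^{k,\infty}(U)}\cdot \|u\|_{W^{k,p}(\Omega)}^{d}$ for some $1\le d\le k$. Summing over the finitely many terms and over $1\le|\alpha|\le k$, and adding the $|\alpha|=0$ contribution, yields \eqref{eq: bound for C_g} with $P(x,y)=\sum_{d=1}^{k} C_d\, x\, y^{d}$, which visibly vanishes at $y=\|u\|_{W^{k,p}(\Omega)}=0$; well-definedness of $\Phi_{g_0}$ (i.e.\ $g_0\circ u\in W^{k,p}(\Omega)$) falls out of the same estimates once one checks measurability and the chain rule at Sobolev regularity, which is standard given $W^{k,p}(\Omega)\hookrightarrow C^{k-1}(\overline\Omega)$.

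The main obstacle I anticipate is bookkeeping rather than anything deep: making the Fa\`a di Bruno expansion precise enough to see exactly which factors land in $L^\infty$ versus $L^p$, and being careful that the case where some $\partial^{\gamma_j}u$ has order $k$ is genuinely the only one requiring the $L^p$ slot. One also has to be slightly careful that the chain rule holds for $W^{k,p}$ functions composed with smooth $g_0$ — this is where the Lipschitz boundary and $\frac mp<1$ (hence $u\in C^{k-1}$ up to the boundary, with $k$-th derivatives in $L^p$) are used, and where one might invoke the density of smooth functions in $W^{k,p}(\Omega)$ to justify the formula by approximation. None of this is hard, but it is the part that needs to be written with care; the Sobolev-embedding and H\"older estimates themselves are routine.
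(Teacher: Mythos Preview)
Your proposal is correct and follows essentially the same approach as the paper's proof: both expand $\partial^\alpha(g_0\circ u)$ via the Fa\`a di Bruno formula, isolate the $\alpha=0$ term as $\|g_0\|_{L^\infty(U)}$, separate the single top-order factor $\partial^\alpha u$ (measured in $L^p$) from the lower-order factors $\partial^\gamma u$ with $|\gamma|\le k-1$ (controlled in $L^\infty$ via the Sobolev embedding $W^{k,p}\hookrightarrow W^{k-1,\infty}$ available since $\frac{m}{p}<1$), and justify the chain rule at Sobolev regularity by density of smooth functions. The paper writes the decomposition as $Q^\alpha + K^\alpha$ with $K^\alpha$ the explicit first-order-in-$g_0$ top term, but this is purely notational; the substance is identical.
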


The reason we replace $g$ with $g_0$ is to not cause confusion: during the proof of Lemma \ref{lemma: composition holomorphic} (a special case of Proposition \ref{prop: loc composition is holo}), we will apply Lemma \ref{lemma: composition bounded} to auxiliary functions.

\begin{proof}
Write out 
$$||\Phi_{g_0}(u)||_{W^{k,p}(\Omega)}=\sum_{|\alpha|\leq k} ||\partial^\alpha (\Phi_{g_0}(u))||_{L^p(\Omega)}.$$ The $\alpha=0$ term is trivially bounded by $C(p,U)||g_0||_{L^\infty(U)}$. Controlling the other terms in $||\Phi_{g_0}(u)||_{W^{k,p}(\Omega)}$ is a straightforward application of the Fa{\`a} di Bruno formula, which says here that for every multi-index $\alpha$ of length at most $k$ and smooth $u\in W^{k,p}(\Omega,U)$,
$$\partial^\alpha (g_{0}\circ u)(z)=P^\alpha(((\partial^\gamma g_{0})(u(z)))_{|\gamma|\leq |\alpha|}, (\partial^\gamma u(z))_{|\gamma|\leq |\alpha|}),$$
 where $P^\alpha$ is a linear combination with non-negative coefficients in the $(\partial^\gamma g_{0})(u(z))$'s and $\partial^\gamma u(z)$'s. We prove (\ref{eq: bound for C_g}) for a smooth $u\in W^{k,p}(\Omega,U)$, and then the general result follows using the density of smooth functions in the Sobolev space.
 
 Let $z_1,\dots, z_m$ be the complex coordinates on $\Omega$ and $\zeta_1,\dots, \zeta_n$ the complex coordinates on $U$. 
It is checked directly that
$$\partial^\alpha (g_{0}\circ u)(z)= Q^\alpha(((\partial^\gamma g_{0})(u(z)))_{|\gamma|\leq |\alpha|}, (\partial^\gamma u(z))_{|\gamma|<|\alpha|}))+ K^\alpha(g_0,u)(z),$$ for $Q^\alpha$ some linear combination with non-negative coefficients as above and  
$$K^\alpha(g_0,u)(z):=\sum_{i=1}^n\Big (\frac{\partial g_0}{\partial \zeta_i}(u(z)) \partial^\alpha u_i(z) + \frac{\partial g_0}{\partial \overline{\zeta}_i}(u(z)) \partial^\alpha \overline{u}_i(z)\Big ).$$
Taking norms, we have the bound
$$|\partial^\alpha (g_{0}\circ u)|\leq  Q^\alpha((||g_0||_{W^{s,\infty}(U)})_{s\leq k}, (||u(z)||_{W^{s,\infty}(\Omega)})_{s<k})+ ||g_0||_{W^{1,\infty}(U)}(|\partial^\alpha u| + |\partial^\alpha \overline{u}|).$$
Using our assumption that $\frac{m}{p}<1$, for all $s,$ $W^{s+1,p}$ embeds continuously into $W^{s,\infty}$, and hence 
$$|\partial^\alpha (g_{0}\circ u)|\leq  Q^\alpha((||g_0||_{W^{s,\infty}(U)})_{s\leq k}, (||u(z)||_{W^{s+1,p}(\Omega)})_{s<k})+ ||g_0||_{W^{1,\infty}(U)}(|\partial^\alpha u| + |\partial^\alpha \overline{u}|).$$
It is clear that the $L^p$ norm of $(|\partial^\alpha u| + |\partial^\alpha \overline{u}|)$ is controlled by the $W^{k,p}(\Omega)$ norm of $u$. Putting it all together, the $L^p(\Omega)$ norm of $\partial^\alpha (g_{0}\circ u)$ is bounded by a polynomial in terms of $||g_0||_{W^{k,\infty}(U)}$ and $||u||_{W^{k,p}(\Omega)}.$ To get the bound (\ref{eq: bound for C_g}), sum up the bounds above for every non-zero $\alpha.$ As discussed above, handling the smooth case suffices for the general result.
\end{proof}

With Lemma \ref{lemma: composition bounded} in hand, we come to the relatively compact version of Proposition \ref{prop: loc composition is holo}. In the proof below, for a multi-index $\beta=(\beta_1,\dots, \beta_n)$ and $v=(v_1,\dots, v_n)\in \C^n$, we set $v^\beta =v_1^{\beta_1}\dots v_n^{\beta_n}.$

\begin{lemma}\label{lemma: composition holomorphic}
   Let $\Omega\subset \C^m$ and $U\subset \C^n$ be relatively compact open subsets with Lipschitz boundary, and let $g: U\to \C$ be a $C^\infty$ function that extends smoothly to a neighbourhood of the closure of ${U}$. Assuming $\frac{m}{p}<1$, the map $$\Phi_g: W^{k,p}(\Omega,U)\to W^{k,p}(\Omega), \hspace{1mm} u\mapsto g\circ u,$$ is well-defined and Fréchet differentiable. Furthermore, the map is Fréchet holomorphic if and only if $g$ is holomorphic in $U$. 
\end{lemma}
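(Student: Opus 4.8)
The plan is to reduce everything to Lemma~\ref{lemma: composition bounded} together with the Banach algebra structure of $W^{k,p}(\Omega)$ (available since $k-\frac mp>0$), proving in order: well-definedness, a local Lipschitz bound for smooth compositions, Fréchet differentiability with an explicit derivative, and finally the holomorphicity dichotomy read off from the shape of that derivative. Well-definedness is immediate from Lemma~\ref{lemma: composition bounded} with $g_0=g$. Enlarging $U$ slightly to a relatively compact Lipschitz domain $U'$ with $\overline U\subset U'$ on which $g$ still extends smoothly, I note that for $w$ in a small enough ball of $W^{k,p}(\Omega)^n$ the Morrey embedding $W^{k,p}(\Omega)\hookrightarrow C^0(\overline\Omega)$ (valid because $p>m$) keeps $u_0+tw$ valued in $U'$ for all $t\in[0,1]$ whenever $u_0\in W^{k,p}(\Omega,U)$, so the compositions below make sense. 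The first thing I would establish is that for any $C^\infty$ function $h$ that extends smoothly past $\overline{U'}$ the map $\Phi_h$ is locally Lipschitz near $u_0$: writing $w=u-u_0$, the pointwise fundamental theorem of calculus gives
\begin{equation*}
h\circ u-h\circ u_0=\sum_{i=1}^n\int_0^1\Big(\tfrac{\partial h}{\partial\zeta_i}(u_0+tw)\,w_i+\tfrac{\partial h}{\partial\overline\zeta_i}(u_0+tw)\,\overline w_i\Big)\,dt,
\end{equation*}
and taking $W^{k,p}(\Omega)$ norms inside the integral, using that multiplication is bounded on the algebra $W^{k,p}(\Omega)$ and that Lemma~\ref{lemma: composition bounded} bounds $\|\Phi_{\partial h/\partial\zeta_i}(u_0+tw)\|_{W^{k,p}(\Omega)}$ uniformly for $\|w\|_{W^{k,p}(\Omega)}\le R$ (the partials of $h$ again extending smoothly past $\overline{U'}$), yields $\|\Phi_h(u)-\Phi_h(u_0)\|_{W^{k,p}(\Omega)}\le C(u_0,R)\,\|u-u_0\|_{W^{k,p}(\Omega)}$.

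\textbf{Fréchet differentiability.} I then claim the Fréchet derivative of $\Phi_g$ at $u_0$ is
\begin{equation*}
D\Phi_g(u_0)[v]=\sum_{i=1}^n\Big(\Phi_{\partial g/\partial\zeta_i}(u_0)\cdot v_i+\Phi_{\partial g/\partial\overline\zeta_i}(u_0)\cdot\overline v_i\Big),
\end{equation*}
a bounded $\R$-linear operator $W^{k,p}(\Omega)^n\to W^{k,p}(\Omega)$ since each $\Phi_{\partial g/\partial\zeta_i}(u_0)\in W^{k,p}(\Omega)$ by Lemma~\ref{lemma: composition bounded} and multiplication is continuous on $W^{k,p}(\Omega)$. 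To verify it is the derivative, apply the pointwise fundamental theorem of calculus to $g$ as above, subtract $D\Phi_g(u_0)[v]$, and rewrite the remainder as $\sum_i\int_0^1\big((\tfrac{\partial g}{\partial\zeta_i}(u_0+tv)-\tfrac{\partial g}{\partial\zeta_i}(u_0))v_i+(\tfrac{\partial g}{\partial\overline\zeta_i}(u_0+tv)-\tfrac{\partial g}{\partial\overline\zeta_i}(u_0))\overline v_i\big)\,dt$; the algebra estimate together with the local Lipschitz bound of the previous step applied to $h=\partial g/\partial\zeta_i$ and $h=\partial g/\partial\overline\zeta_i$ bounds its $W^{k,p}(\Omega)$ norm by $C(u_0)\|v\|_{W^{k,p}(\Omega)}^2=o(\|v\|_{W^{k,p}(\Omega)})$.

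\textbf{Holomorphicity.} Since a Fréchet differentiable map between complex Banach spaces is Fréchet holomorphic precisely when its derivative is $\C$-linear at each point, it remains to observe: if $g$ is holomorphic on $U$ then $\partial g/\partial\overline\zeta_i\equiv0$, so $D\Phi_g(u_0)[v]=\sum_i\Phi_{\partial g/\partial\zeta_i}(u_0)v_i$ is $\C$-linear and $\Phi_g$ is holomorphic; conversely, if $\Phi_g$ is holomorphic, test $\C$-linearity of $D\Phi_g(u_0)$ at the constant map $u_0\equiv w_0$ (constants lie in $W^{k,p}(\Omega)$ as $\Omega$ is bounded) and at $v=(c\mathbf 1,0,\dots,0)$, so that $D\Phi_g(u_0)[v]=\big(\tfrac{\partial g}{\partial\zeta_1}(w_0)c+\tfrac{\partial g}{\partial\overline\zeta_1}(w_0)\overline c\big)\mathbf 1$; comparing $c=1$ and $c=i$ forces $\tfrac{\partial g}{\partial\overline\zeta_1}(w_0)=0$, and running over all coordinates and all $w_0\in U$ shows $g$ is holomorphic. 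The only delicate point is the bookkeeping in the differentiability estimate — bounding the $W^{k,p}$ norm of the Taylor remainder through the algebra structure and Lemma~\ref{lemma: composition bounded} while keeping the compositions inside the region where $g$ is smooth — but once the local Lipschitz bound for smooth compositions is in place this is routine.
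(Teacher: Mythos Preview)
Your proof is correct and follows essentially the same strategy as the paper: reduce everything to Lemma~\ref{lemma: composition bounded} and the Banach algebra structure of $W^{k,p}(\Omega)$, identify the Fr\'echet derivative as $v\mapsto\sum_i(\partial_{\zeta_i}g\circ u_0)v_i+(\partial_{\overline\zeta_i}g\circ u_0)\overline v_i$, and read off holomorphicity from $\C$-linearity. The only organizational difference is that the paper controls the remainder via the second-order Taylor formula with integral remainder $R_\beta(u,h)h^\beta$, whereas you iterate the fundamental theorem of calculus (first to expand $g(u_0+v)-g(u_0)$, then via your local Lipschitz step applied to the first partials of $g$); these are equivalent bookkeepings of the same estimate, and your converse for holomorphicity at constant maps is a bit more explicit than the paper's one-line observation.
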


\begin{proof}
    As a Fréchet manifold, the tangent space of $W^{k,p}(\Omega,U)$ at any point identifies with $W_0^{k,p}(\Omega)$, i.e., the $W^{k,p}(\Omega)$ functions that map to $0$ under the trace operator. Let $u\in W^{k,p}(\Omega,U)$ and $h\in W_0^{k,p}(\Omega)$. By the Sobolev embedding theorem, $u$ is continuous. By Taylor's Theorem with Remainder, as a function on $\Omega,$ $$g(u+h)(z) = g(u)(z) + dg_{u(z)}(h(z))+\sum_{|\beta|=2}R_\beta(u,h)(z)(h(z))^\beta,$$ where each $\beta$ is a multi-index corresponding to complex coordinates on $U$, and
    $$R_\beta(u,h)=\int_0^1(1-t)(\partial^\beta g)\circ (u(z)+th(z)) dt.$$
We claim that 
\begin{enumerate}
    \item the $\R$-linear map $dg_u: W_0^{k,p}(\Omega)\to W^{k,p}(\Omega),$ $h\mapsto dg_u(h),$ where $(dg_u(h))(z)=dg_{u(z)}(h(z)),$ is well-defined and continuous, and  
    \item for every $\beta$ with $|\beta|=2,$ $R_\beta(u,h)(z)$ is in $W^{k,p}(\Omega)$ and has norm controlled linearly by $||u||_{W^{k,p}(\Omega)}$ and $||h||_{W^{k,p}(\Omega)}.$ 
\end{enumerate}
Granting these claims, it follows easily that $\Phi_g$ is Fréchet differentiable with Fréchet derivative at a point $u$ given by the $\R$-linear map $dg_u$. Indeed, since, under our assumptions, $W^{k,p}(\Omega)$ is a Banach algebra, 
$$\frac{||g(u+h)-g(u)-dg_u(h)||_{W^{k,p}(\Omega)}}{||h||_{W^{k,p}(\Omega)}} \leq \sum_{|\beta|=2}\frac{||R_\beta(u,h)||_{W^{k,p}(\Omega)}||h^\beta||_{W^{k,p}(\Omega)}}{||h||_{W^{k,p}(\Omega)}},$$ and it is easily checked that $$\lim_{||h||_{W^{k,p}(\Omega)}\to 0} \frac{||h^\beta||_{W^{k,p}(\Omega)}}{||h||_{W^{k,p}(\Omega)}} =0.$$

Beginning with the justification of claim (1), since $dg_u$ is linear, we only need to show that it is continuous at $0$. In coordinates $z_1,\dots z_m$ on $\Omega$ and $\zeta_1,\dots, \zeta_n$ on $U$, $$dg_{u(z)}(h(z)) = \sum_{i=1}^n \Big ( \frac{\partial g}{\partial \zeta_i}(u(z)) h_i(z)+ \frac{\partial g}{\partial \overline{\zeta}_i}(u(z)) \overline{h}_i(z)\Big ).$$
Using the Banach algebra property, $$||dg_{u(z)}(h(z))||_{W^{k,p}(\Omega)}\leq C\sum_{i=1}^n \Big( \Big|\Big | \frac{\partial g}{\partial \zeta_i}(u(z))\Big |\Big |_{W^{k,p}(\Omega)}+  \Big|\Big | \frac{\partial g}{\partial \overline{\zeta}_i}(u(z))\Big|\Big|_{W^{k,p}(\Omega)}\Big ) ||h||_{W^{k,p}(\Omega)},$$ for some constant $C$ depending on $k, p,$ and $\Omega$. By Lemma \ref{lemma: composition bounded} above, taking $g_0$ to range over $\frac{\partial g}{\partial \zeta_i}$ and $\frac{\partial g}{\partial \overline{\zeta}_i}$, we obtain that the $W^{k,p}$ operator norm of $dg$ is uniformly controlled by that of $||u||_{W^{k,p}(\Omega)}$ and norms of derivatives of $g$. Claim (1) thus follows.

For claim (2), to bound $R_\beta(u,h),$ we need only a uniform bound on $(\partial^\beta g)\circ (u+th)(z)$ in terms of $u$ and $h$. We simply apply Lemma \ref{lemma: composition bounded} with $g_0=\partial^\beta g$, and then the claim is immediate.

As discussed above, the differentiability is now established. Finally, observe that $g$ is holomorphic if and only if $dg_u$ is $\C$-linear, which is equivalent to saying that $\Phi_g$ is Fréchet holomorphic.   
\end{proof}
Finally, we can prove Proposition \ref{prop: loc composition is holo}.
\begin{proof}[Proof of Proposition \ref{prop: loc composition is holo}]
    Choose a covering of $\Omega$ by relatively compact open subsets with Lipschitz boundary $\{\Omega_m\}_{m=1}^\infty$. The Fréchet structures on $W_{\textrm{loc}}^{k,p}(\Omega,U)$ and $W_{\textrm{loc}}^{k,p}(\Omega)$ can be generated by taking the $W^{k,p}$ norms over every $\Omega_m$. 
    
    Given $u\in W^{k,p}(\Omega,U)$, consider the restriction $u_m:=u|_{\Omega_m}$ in $W^{k,p}(\Omega_m,U).$ There exists a relatively compact subset $U_0$ of $U$ with Lipschitz boundary such that $u(\Omega_m)\subset U_0.$ Since the $L^\infty$ norm is continuous on $W^{k,p}(\Omega_m,U),$ for any sufficiently small variation $h\in W_0^{k,p}(\Omega)$ with restriction $h_m$ to $\Omega_m,$ we have $u_m+h_m\in W^{k,p}(\Omega_m,U_0)$. Using this observation, one can apply Lemma \ref{lemma: composition holomorphic} to see that the restriction $\Phi_g: W^{k,p}(\Omega_m,U)\to W^{k,p}(\Omega_m)$ is holomorphic. Hence, using our explicit choice of semi-norms for the Fréchet structure, one sees immediately that the map $\Phi_g: W_{\textrm{loc}}^{k,p}(\Omega,U)\to W_{\textrm{loc}}^{k,p}(\Omega)$ is holomorphic.
\end{proof}

\subsection{Proof of Theorem B}\label{sec: proof of theorem B}
Finally, using Theorem A and Proposition \ref{prop: loc composition is holo}, we prove Theorem B. Before we begin, we discuss the Fréchet manifold structure on $\mathcal{S}^l(S)$, which is made similarly to that of $\mathcal{C}^l(S,c_0).$ For now, let $l<\infty$. Recalling that $\mathcal{S}^l(S)$ is the space of $W_{\textrm{loc}}^{l,2}$ complex-valued symmetric bilinear forms on the complexified tangent bundle of $S$, we can more generally consider the space $\mathcal{S}^{l,p}(S)$ consisting of $W_{\textrm{loc}}^{l,p}$ bilinear forms (so, in our notation, $\mathcal{S}^{l,2}(S)=\mathcal{S}^{l}(S)$). Uniformizing $(\Tilde{S},c_0)$ to $\mathbb{H}$, $\mathcal{S}^{l,p}(S)$ becomes a closed subspace of the space $\mathcal{S}^{l,p}(\mathbb{H})$ of $W_{\textrm{loc}}^{l,p}$ complex symmetric bilinear forms on the complexified tangent bundle of $\mathbb{H}$. The closed subspace is defined by the condition that the bilinear forms satisfy the required transformation law in order to descend to $S$. For all $l\in \mathbb{Z}_{\geq 0}$ and $1<p<\infty,$ $\mathcal{S}^{l,p}(\mathbb{H})$ admits a Fréchet manifold structure, constructed using a relatively compact exhaustion analogous to the one made for $W_{\textrm{loc}}^{l,\infty}$ in Section \ref{subs: complex structures} and by taking $W_{\textrm{loc}}^{l,p}$ norms over the exhaustion. Then $\mathcal{S}^{l,p}(S)$ inherits this structure by inclusion. For $l=\infty$, we generate the Fréchet space structure by using all of the $W_{\textrm{loc}}^{l,p}$ semi-norms. We note here that the spaces $\mathcal{S}^{l,p}(\mathbb{H})$, $l\in \mathbb{Z}_{\geq 0}$, $1<p<\infty,$ will also appear in the proof of Theorem B. 

As well, we consider the space $W_{\textrm{loc}}^{l+1,p}(\mathbb{H}\cup \overline{\mathbb{H}})$. Explicitly, an element of $W_{\textrm{loc}}^{l+1,p}(\mathbb{H}\cup \overline{\mathbb{H}})$ is a function on $\mathbb{H}\cup \overline{\mathbb{H}}$ that restricts to a $W_{\textrm{loc}}^{l+1,p}$ function on $\mathbb{H}$ and on $\overline{\mathbb{H}}$. To define our semi-norms for the Fréchet space structure, we choose a covering by relatively compact open sets that are contained in either $\mathbb{H}$ or $\overline{\mathbb{H}}$. 
\begin{proof}[Proof of Theorem B]
For $\delta<1$, set $\mathcal{C}_\delta^l(S,c_0)$ to be the set of complex structures corresponding to $W_{\textrm{loc}}^{l,\infty}$ Beltrami forms with $L^\infty$ norm at most $\delta$, and define $\mathcal{C}_\delta^l(\overline{S},\overline{c_0})$ analogously. We will show that, for $p>2$ and $\eta=\frac{1}{N_p}$, the Bers metric map restricts to a holomorphic map $$\mathcal{C}_\eta^l(S,c_0)\times \mathcal{C}_\eta^l(\overline S,\overline{c_0})\to \mathcal{S}^{l,p}(S).$$ Since the inclusion of Fr{\'e}chet spaces $\mathcal{S}^{l,p}(S)\to \mathcal{S}^{l}(S)$ is holomorphic (more generally, $W_{\textrm{loc}}^{l,p_1}\to W_{\textrm{loc}}^{l,p_2}$ is holomorphic for $p_1>p_2)$, Theorem B will follow.

The starting point of the proof is that the Bers construction above defines a map from $\mathcal{C}_\eta^l(S,c_0)\times \mathcal{C}_\eta^l(\overline S,\overline{c_0})$ to $W_{\textrm{loc}}^{l+1,p}(\mathbb{H}\cup\overline{\mathbb{H}})$, by associating complex structures corresponding to Beltrami forms $\mu_1$ and $\mu_2$ to $f^{\hat{\mu}}|_{\mathbb{H}\cup\overline{\mathbb{H}}}.$ By Theorem A, this map is holomorphic. 

We consider $W_{\textrm{loc}}^{l+1,p}(\mathbb{H},\mathbb{C}\times\mathbb{C}\setminus \Delta),$ which is an open subset of the complex Fr{\'e}chet space $W_{\textrm{loc}}^{l+1,p}(\mathbb{H},\mathbb{C}\times\mathbb{C}),$ hence itself a complex Fr{\'e}chet manifold. We obtain a map $$(\f+, \fm)\colon \mathcal C^{l}_\eta(S,c_0)\times \mathcal{C}^{l}_\eta(\overline S,\overline{c_0})\to W_{\textrm{loc}}^{l+1,p}(\mathbb{H},\mathbb{C}\times\mathbb{C}\setminus \Delta)$$ by composing our map $\mathcal{C}_\eta^l(S,c_0)\times \mathcal{C}_\eta^l(\overline S,\overline{c_0})\to W_{\textrm{loc}}^{l+1,p}(\mathbb{H}\cup\overline{\mathbb{H}})$ with the linear holomorphic map $W_{\textrm{loc}}^{l+1,p}(\mathbb{H}\cup\overline{\mathbb{H}}) \to W_{\textrm{loc}}^{l+1,p}(\mathbb{H},\mathbb{C}\times\mathbb{C})$ that maps $
    \beta\in W_{\textrm{loc}}^{l+1,p}(\mathbb{H}\cup\overline{\mathbb{H}})$ to the function $z\mapsto (\beta|_{\mathbb H}(z), \beta|_{\overline{\mathbb H}}(\overline z))$.

To prove Theorem B, it suffices to prove that if $\inners$ is the holomorphic Riemannian metric on $\mathbb{CP}^1\times\mathbb{CP}^1\setminus \Delta$ from $\eqref{eq: holo metric}$ in the introduction, restricted to $\mathbb{C}\times\mathbb{C}\setminus \Delta,$ then the map $$W_{\textrm{loc}}^{l+1,p}(\mathbb{H},\mathbb{C}\times\mathbb{C}\setminus \Delta)\to \mathcal{S}^{l,p} (\mathbb{H}), \qquad  \hspace{1mm} u\mapsto u^*\inners,$$ is well-defined, i.e., lands in $\mathcal{S}^{l,p} (\mathbb{H})$, and is holomorphic. We point out that the argument that we use applies with minor change to any holomorphic metric on $\mathbb{C}\times\mathbb{C}\setminus \Delta.$

To this end, once we've established the result for all $l<\infty,$ the $l=\infty$ case follows, since, as mentioned in Section \ref{subs: complex structures}, we can generate the Fréchet structure on $C^\infty$ using the semi-norms coming from the $W_{\textrm{loc}}^{l,p}$ semi-norms. Hence, assume $l<\infty.$

If $(z_1,z_2)$ is the global coordinate on $\mathbb{C}\times \mathbb{C}$, then we can write $$\inners=\lambda(z_1,z_2)dz_1\cdot dz_2,$$ and hence if $u=(u_1,\overline{u_2}),$ $$u^*\inners = \lambda(u(z))du_1\cdot d\overline{u_2},$$ where $\inners$ is holomorphic. The map from $W_{\textrm{loc}}^{l+1,p}(\mathbb{H})$ to $W_{\textrm{loc}}^{l+1,p}$ complex forms on $\mathbb{H}$ given by $u\mapsto du$ is complex linear and continuous, hence holomorphic. By Proposition \ref{prop: loc composition is holo}, the map $W_{\textrm{loc}}^{l+1,p}(\mathbb{H}, \C\times\C\setminus \Delta)\to W_{\textrm{loc}}^{l+1,p}(\mathbb{H})$ given by $u\mapsto \lambda\circ u$ is holomorphic. We compose this map with the trivially holomorphic inclusion $W_{\textrm{loc}}^{l+1,p}(\mathbb{H})\to W_{\textrm{loc}}^{l,p}(\mathbb{H})$. Since $W_{\textrm{loc}}^{l,p}$ is a Fréchet algebra, we can apply the product rule to see that $u\mapsto u^*\inners$ is indeed holomorphic. Recalling our discussion above, the last assertion completes the proof of Theorem B.
\end{proof}

\printbibliography

\end{document}